\definecolor{verylight}{gray}{0.97}
\definecolor{light}{gray}{0.9}
\definecolor{medium}{gray}{0.85}
\definecolor{dark}{gray}{0.6}
 \def\NZQ{\mathbb}               
 \def\QQ{{\NZQ Q}}
 \def\FF{{\NZQ F}}
 \def\GG{{\NZQ G}}
 \def\frk{\mathfrak}               
 \def\mm{{\frk m}}
 \def\G{{\mathcal G}}
 \def\opn#1#2{\def#1{\operatorname{#2}}} 
 \opn\chara{char} \opn\length{\ell} \opn\pd{pd} \opn\rk{rk}
 \opn\projdim{proj\,dim} \opn\injdim{inj\,dim} \opn\rank{rank}
 \opn\depth{depth} \opn\grade{grade} \opn\height{height}
 \opn\embdim{emb\,dim} \opn\codim{codim}
 \opn\Tr{Tr} \opn\bigrank{big\,rank}
 \opn\superheight{superheight}\opn\lcm{lcm}
 \opn\trdeg{tr\,deg}
 \opn\reg{reg} \opn\lreg{lreg} \opn\ini{in} \opn\lpd{lpd}
 \opn\size{size} \opn\sdepth{sdepth}
 \opn\link{link}\opn\fdepth{fdepth}\opn\lex{lex}
 \opn\tr{tr}
  \opn\Hilb{Hilb}
 \opn\type{type}
 \opn\gap{gap}
 \opn\arithdeg{arith-deg}
 \opn\div{div} \opn\Div{Div} \opn\cl{cl} \opn\Cl{Cl}
 \opn\Spec{Spec} \opn\Supp{Supp} \opn\supp{supp} \opn\Sing{Sing}
 \opn\Ass{Ass} \opn\Min{Min}\opn\Mon{Mon}
 \opn\Ann{Ann} \opn\Rad{Rad} \opn\Soc{Soc}
 \opn\Im{Im} \opn\Ker{Ker} \opn\Coker{Coker} \opn\Am{Am}
 \opn\Hom{Hom} \opn\Tor{Tor} \opn\Ext{Ext} \opn\End{End}
 \opn\Aut{Aut} \opn\id{id}
 \opn\nat{nat}
 \opn\pff{pf}
 \opn\Pf{Pf} \opn\GL{GL} \opn\SL{SL} \opn\mod{mod} \opn\ord{ord}
 \opn\Gin{Gin} \opn\Hilb{Hilb}\opn\sort{sort}
 \opn\PF{PF}\opn\Ap{Ap}
 \opn\mult{mult}
 \opn\bight{bight}
 \opn\aff{aff}
 \opn\relint{relint} \opn\st{st}
 \opn\lk{lk} \opn\cn{cn} \opn\core{core} \opn\vol{vol}  \opn\inp{inp} \opn\nilpot{nilpot}
 \opn\link{link} \opn\star{star}\opn\lex{lex}\opn\set{set}
 \opn\width{wd}
 \opn\Fr{F}
 \opn\QF{QF}
 \opn\G{G}
 \opn\type{type}\opn\res{res}
 \opn\conv{conv}
 \opn\Shad{Shad}
 \opn\ln{ln}
 \opn\gr{gr}
 \def\pot#1#2{#1[\kern-0.28ex[#2]\kern-0.28ex]}
 \opn\dirlim{\underrightarrow{\lim}}
 \opn\inivlim{\underleftarrow{\lim}}
 \let\tensor=\otimes
 \let\iso=\cong
 \let\to=\rightarrow
 \def\Implies{\ifmmode\Longrightarrow \else
         \unskip${}\Longrightarrow{}$\ignorespaces\fi}
 \def\implies{\ifmmode\Rightarrow \else
         \unskip${}\Rightarrow{}$\ignorespaces\fi}
 \def\iff{\ifmmode\Longleftrightarrow \else
         \unskip${}\Longleftrightarrow{}$\ignorespaces\fi}
 \newtheorem{Theorem}{Theorem}[section]
 \newtheorem{Lemma}[Theorem]{Lemma}
 \newtheorem{Corollary}[Theorem]{Corollary}
 \newtheorem{Proposition}[Theorem]{Proposition}
 \newtheorem{Example}[Theorem]{Example}
 \let\epsilon\varepsilon
 \let\kappa=\varkappa
 \def\qed{\ifhmode\textqed\fi
       \ifmmode\ifinner\quad\qedsymbol\else\dispqed\fi\fi}
 \def\textqed{\unskip\nobreak\penalty50
        \hskip2em\hbox{}\nobreak\hfil\qedsymbol
        \parfillskip=0pt \finalhyphendemerits=0}
 \def\dispqed{\rlap{\qquad\qedsymbol}}
 \opn\dis{dis}
 \def\pnt{{\raise0.5mm\hbox{\large\bf.}}}
 \opn\Lex{Lex}
\begin{document}

\title {On the initial behaviour of the number of generators  of powers of monomial ideals}

\author {Reza Abdolmaleki,  J\"urgen Herzog and Rashid Zaare-Nahandi}

\address{Reza Abdolmaleki, Department of Mathematics, Institute for Advanced Studies in Basic Sciences (IASBS), 45195-1159 Zanjan, Iran}
\email{abdolmaleki@iasbs.ac.ir}

\address{J\"urgen Herzog, Fachbereich Mathematik, Universit\"at Duisburg-Essen, Campus Essen, 45117
Essen, Germany}
\email{juergen.herzog@uni-essen.de}

\address{Rashid Zaare-Nahandi, Department of Mathematics, Institute for Advanced Studies in Basic Sciences (IASBS), 45195-1159 Zanjan, Iran}
\email{rashidzn@iasbs.ac.ir}

\dedicatory{ }

\begin{abstract} Given a number $q$,  we construct a  monomial ideal $I$  with the property that the function which describes the number of generators of $I^k$ has at least $q$ local maxima. 
\end{abstract}

\thanks{This paper was written while the first author was visiting Department of Mathematics of University Duisburg-Essen, Germany. He would like to thank  Professor Herzog for his support and hospitality}

\subjclass[2010]{Primary 13F20; Secondary  13H10}



\maketitle

\setcounter{tocdepth}{1}

\section*{Introduction}

In 1974 Judith Sally asked the second author whether there exists a one-dim\-en\-sion\-al local domain for which the square of the maximal ideal has less generators than the maximal ideal itself. In \cite{HW},  such an example has been provided. Later more such examples were found by other authors. On the other hand, in a polynomial ring the number of generators of the powers of  any non-principal ideal $I$  which is generated in a single degree is strictly increasing, and in the case of equigenerated monomial ideals there is general lower bound for the number of generators for each power  given by Freiman's theorem, see \cite{HMZ} and \cite{HHZ}.
Quite to the contrary,  if the monomial ideal $I$  is not generated in a single degree, then it may very well happen that $I^2$ has less generators than $I$. For monomial ideals $I$ in 2 variable,  a sharp lower bound for the number of generators $\mu(I^2)$ of $I^2$ is given in \cite{E1}, and Gasanova \cite{Ga} gave examples of monomial ideals $I$ with the property that for any given number $k$ one has $\mu(I^k)<\mu(I)$.

The question arises how ``wild'' the initial behaviour of the function $f_I(k)=\mu(I^k)$ could be for a monomial ideal in the polynomial ring. Of course for $k\gg 0$, $f_I(k)$ is a polynomial function since it is the Hilbert function of the fibre cone of $I$ (see \cite [Theorem 4.1.3]{B1}).  As a main result of this note we provide a family of monomial ideals $I$  with the property that the number of local maxima of $f_I$ exceeds any given number, see Theorem~\ref{main}. 

In Section~1 we introduce  the height $n$ monomial ideals $J\subset S=[x_1,\ldots,x_n]$,  $J=(x_1^{am},\ldots,x_n^{am})(x_1^m,\ldots,x_n^m)$, which we call the basic ideals of our construction (given by the parameters $a$ and $m$), and compute 
$\mu(J^k)$  and a socle degree $s(J^k)$ of $J^k$ for all $k \geq 1$, see Proposition~\ref{power} and Corollary~\ref{sp}.

Section~2 deals with the modified basic ideals $I$ which are obtained from $J$ by adding a $c$-th  power of the maximal ideal, where $c\leq s(J)$ and bigger than the  least degree 
of $J$. This ideal (with $n$ fixed) depends on the  parameters $a$, $c$ and $m$.  In the polynomial ring in $2$ variables we consider the modified basic ideal with parameters $a\geq 3$, $m \gg0$ and $c=s(J)-(a-2)m+1$, and denote this ideal by $I_{a,m}$.  For suitable choices of $a$ and $m$ it can be seen, that $f(k)=\mu(I_{a,m}^k)$ is strictly decreasing from a certain power on and for any given number of steps. The parameters can also be chosen that for any given number $b$, the function $f(k)$ has local maximum (respectively, local minimum) for $k=b$.

Finally in Section~3 we apply the results of the previous section to construct monomial ideals whose number of local maxima exceeds any given number.  These ideals are obtained as follows:  let $l\geq 1$ and choose for each $j=1,\ldots,l$ an ideal $I_{a_j,m_j}\in K[x_j,y_j]$, and let $I\subset K[x_1,\ldots,x_l,y_1,\ldots,y_l]$ be the ideal $I_{a_1,m_1}\cdots I_{a_l,m_l}$. The powers of such ideals tend to have many local maxima. More specifically, we let $a_j=ja$ and $m_j=a$ for all $j$. Then it is shown in Theorem~\ref{main}, that of a given integer $q$,  the integers $a$ and $l$ can be chosen that $I$ has at least $q$ local maxima. 

\section{The basic ideals of our construction and their  powers}

Let $K$ be a field and $S=K[x_1,\ldots,x_n]$ be the polynomial ring over $K$ in the variables $x_1,\ldots,x_n$. We denote by $\mm$ the unique maximal graded ideal of $S$, and by $\mu(I)$ the minimal number of generators of a graded ideal $I \subset S $.

We fix integers $ m \geq 1, a \geq  3 ,n \geq 2$, and set $ J=(x_{1}^{am}, \ldots , x_{n}^{am}) (x_{1}^{m}, \ldots , x_{n}^{m})$. This section is devoted to finding the socle degree of $J^k$, and the  number of minimal generators of $J^k$ for $k \geq 2$. These ideals allow us to construct ideals $I$ for which the number of generators  of the powers of $I$  have an unexpected behaviour.

\medskip
For the study of the ideal $J$ we need a few lemmata.  Let $H$ be a monomial ideal and $r\geq 1$ be an integer. The monomial ideal $H^{[r]}$ with $G(H^{[r]})=\{u^r \:\;  u \in G(H) \}$ is called the {\em pseudo-Frobenius power} of $H$.

\begin{Lemma}\label{resolution}
Let
\[
\FF: 0\to F_{p}\to F_{p-1}\to \cdots \to F_1\to F_0\to 0
\]
be a minimal graded free $S$-resolution of $S/H$ with $F_i=\bigoplus_jS(-a_{ij})$ for $i=1, \ldots ,p$. Then
\[
\GG: 0\to G_{p}\to G_{p-1}\to \cdots \to G_1\to G_0\to 0
\]
is a minimal graded free $S$-resolution of $S/H^{[r]}$ with $F_i=\bigoplus_jS(-a_{ij}r)$ for $i=1, \ldots ,p$.
\end{Lemma}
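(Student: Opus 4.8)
The plan is to exhibit $H^{[r]}$ as the extension of $H$ along a flat $K$-algebra endomorphism of $S$ and then to obtain $\GG$ from $\FF$ by base change. Consider
\[
\varphi\colon S\longrightarrow S,\qquad \varphi(x_i)=x_i^r\quad(i=1,\dots,n).
\]
First I would record two elementary facts about $\varphi$. For a monomial $u=x_1^{b_1}\cdots x_n^{b_n}$ one has $\varphi(u)=u^r$, and since the extension of an ideal along a ring map is generated by the images of its generators, $\varphi(H)S$ is the ideal with generating set $\{u^r\;:\;u\in G(H)\}$; moreover this set is minimal, because for monomials $v\mid u$ iff $v^r\mid u^r$, so raising to the $r$-th power neither creates nor destroys divisibilities among the elements of $G(H)$. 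Hence $\varphi(H)S=H^{[r]}$. Secondly, $\varphi$ is (faithfully) flat: viewed through $\varphi$, the ring $S$ is a free $S$-module with basis the monomials $x_1^{c_1}\cdots x_n^{c_n}$ with $0\le c_i<r$.

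Then I would write $S_\varphi$ for $S$ regarded as an $S$-algebra via $\varphi$ and apply $-\otimes_S S_\varphi$ to $\FF$. By flatness the resulting complex is exact in positive homological degrees, and its $0$-th homology is $(S/H)\otimes_S S_\varphi=S/\varphi(H)S=S/H^{[r]}$; thus it is a graded free resolution of $S/H^{[r]}$, and it remains only to identify the terms and to verify minimality. Base change carries each $F_i=\bigoplus_j S(-a_{ij})$ to a free module of the same rank; tracking internal degrees — $\varphi$ multiplies the degree of every homogeneous element by $r$, so the free generator attached to the summand $S(-a_{ij})$ now lies in degree $ra_{ij}$ — identifies this free module with $\bigoplus_j S(-ra_{ij})=G_i$, so the base-changed complex is exactly $\GG$. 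For minimality, I would use that minimality of $\FF$ means that every entry of each differential matrix of $\FF$ is a homogeneous element of positive degree, hence lies in $\mm$; applying $\varphi$ sends such an entry into $\varphi(\mm)S=(x_1^r,\dots,x_n^r)\subseteq\mm$, so the differentials of $\GG$ have all entries in $\mm$ and $\GG$ is minimal.

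The one point that is not entirely automatic — and the step I expect to need the most care — is the grading bookkeeping: $\varphi$ is not a graded homomorphism in the degree-preserving sense but scales every internal degree by the factor $r$, so one must either pass to the correspondingly regraded base change or, as above, argue directly on the degrees of the free generators. (An alternative, slightly heavier, route would bypass $\varphi$ altogether: the assignment $u\mapsto u^r$ induces an isomorphism from the lcm-lattice of $H$ onto that of $H^{[r]}$ which multiplies all degrees by $r$, and one may invoke the fact that the minimal graded free resolution of a monomial ideal is determined by its lcm-lattice; but the flat base change is shorter and self-contained.)
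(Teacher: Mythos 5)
Your proof is correct and follows essentially the same route as the paper: the paper also base-changes the resolution $\FF$ along the flat $K$-algebra endomorphism $x_i\mapsto x_i^r$ of $S$, viewing $S$ as a module over itself via this map. You simply spell out the details (flatness via the monomial basis with exponents below $r$, identification of the extended ideal with $H^{[r]}$, minimality, and the degree scaling) that the paper leaves implicit.
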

\begin{proof}
Consider the flat $K$-algebra homomorphism $\alpha: S\to S$ with $x_i\mapsto x_i^r$.  We view $S$ as an  $S$-module via $\alpha$ and denote it by $T$. Then $\GG=\FF\tensor_ST$. This yields the desired conclusion.
\end{proof}

Let $H \subset S$ be a graded ideal with $\dim(S/H)=0$. We denote by $s(H)$ the largest $i$ such that $(S/H)_i \neq 0$. This number is called the {\em socle degree} of $S/H$.

\begin{Lemma}\label{socle}
Let $H \subset S $ be an $\mm$-primary monomial ideal. Then
$$s(H^{[r]})=r(s(H)+n)-n.$$
\end{Lemma}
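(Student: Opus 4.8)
The plan is to argue directly with monomials. Since $H$ and $H^{[r]}$ are monomial ideals and $S/H$ is Artinian, the number $s(H)$ is simply the largest degree of a monomial not lying in $H$, and likewise $s(H^{[r]})$ is the largest degree of a monomial not lying in $H^{[r]}$. So it suffices to understand which monomials belong to $H^{[r]}$.

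The key step is the following translation. For a monomial $v=\prod_{i=1}^n x_i^{b_i}$ write $b_i=rq_i+t_i$ with $0\le t_i\le r-1$, and set $\lfloor v\rfloor=\prod_{i=1}^n x_i^{q_i}$. Then $v\in H^{[r]}$ if and only if $\lfloor v\rfloor\in H$. Indeed, $v\in H^{[r]}$ means that $v$ is divisible by $u^r$ for some $u=\prod_i x_i^{c_i}\in G(H)$, that is, $b_i\ge rc_i$ for all $i$; since each $c_i$ is an integer this is equivalent to $q_i\ge c_i$ for all $i$, i.e.\ to $u\mid\lfloor v\rfloor$. Taking the union over $u\in G(H)$ gives the equivalence.

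Granting this, both inequalities are immediate. For ``$\ge$'', choose a monomial $w=\prod_i x_i^{q_i}\notin H$ with $\deg w=s(H)$ and put $v=\prod_i x_i^{rq_i+r-1}$; then $\lfloor v\rfloor=w\notin H$, hence $v\notin H^{[r]}$, so $s(H^{[r]})\ge\deg v=r\,s(H)+n(r-1)=r(s(H)+n)-n$. For ``$\le$'', take a monomial $v=\prod_i x_i^{b_i}\notin H^{[r]}$ with $\deg v=s(H^{[r]})$; then $\lfloor v\rfloor\notin H$, so $\sum_i\lfloor b_i/r\rfloor=\deg\lfloor v\rfloor\le s(H)$, and since $b_i\le r\lfloor b_i/r\rfloor+(r-1)$ we obtain $s(H^{[r]})=\sum_i b_i\le r\,s(H)+n(r-1)=r(s(H)+n)-n$.

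Alternatively, one could deduce the statement from Lemma~\ref{resolution}: as $S/H$ is Artinian its minimal graded free resolution has length $n$, and a standard local duality argument identifies $s(H)$ with $\max_j a_{nj}-n$, where $F_n=\bigoplus_j S(-a_{nj})$; Lemma~\ref{resolution} replaces each $a_{nj}$ by $ra_{nj}$, yielding $s(H^{[r]})=r\max_j a_{nj}-n=r(s(H)+n)-n$. I expect the only ``obstacle'' to be bookkeeping: in the combinatorial proof, keeping the floor functions and the additive shift by $n$ straight; in the duality proof, justifying $s(H)=\max_j a_{nj}-n$. Neither presents a genuine difficulty, so I would go with the self-contained combinatorial argument.
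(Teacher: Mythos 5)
Your combinatorial proof is correct, and it takes a genuinely different route from the paper. The paper proves the lemma homologically: it invokes Lemma~\ref{resolution} (the flat base change $x_i\mapsto x_i^r$ multiplies all shifts in the minimal free resolution by $r$) and identifies $s(H)$ with $\max_j a_{nj}-n$ via $\Tor_n(K,S/H)\iso H_n(x_1,\ldots,x_n;S/H)$, which is spanned by socle elements wedged with $e_1\wedge\cdots\wedge e_n$; the formula then falls out as $s(H^{[r]})=r\max_j a_{nj}-n$. Your argument instead works directly with monomials: the key observation that $v\in H^{[r]}$ if and only if the ``rounded-down'' monomial $\lfloor v\rfloor\in H$ is verified correctly (the reduction from $b_i\geq rc_i$ to $q_i\geq c_i$ is exactly right since $0\leq t_i\leq r-1$), and both inequalities then follow cleanly; the only small point you leave implicit is that $H^{[r]}$ is again $\mm$-primary so that $s(H^{[r]})$ is defined, which is immediate since $H^{[r]}$ contains $r$-th powers of the pure powers of the variables lying in $H$. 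What each approach buys: the paper's base-change argument gives the entire graded Betti table of $S/H^{[r]}$ at once (Lemma~\ref{resolution} is used independently elsewhere), so the socle-degree formula is a one-line corollary of machinery they need anyway; your proof is elementary and self-contained, avoids resolutions and Koszul homology altogether, and makes explicit which monomial realizes the extremal socle degree of $H^{[r]}$ (replace exponents $q_i$ of an extremal socle monomial of $H$ by $rq_i+r-1$). Your sketched alternative via Lemma~\ref{resolution} is essentially the paper's proof, except that the identification $s(H)=\max_j a_{nj}-n$ is obtained there from the Koszul complex rather than from local duality, though the two are equivalent here.
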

\begin{proof}
Let $\FF$ be a minimal graded free $S$-resolution of $H$ with $F_n=\bigoplus_jS(-a_{nj})$. Then $s(H)=a-n$, where $a=\max_j\{a_{nj}\}$,  because 
\[\Tor_n(K, S/H)\iso H_n(x_1,\ldots,x_n;S/H)\]
 as graded $K$-vector spaces, and since $H_n(x_1,\ldots,x_n;S/H)$ is generated by the elements $ue_1 \wedge \cdots \wedge e_n$ with $u \in (H:\mm )/H$. Applying Lemma~\ref{resolution}, we have $s(H^{[r]})=ra-n=r(s(H)+n)-n$.
\end{proof}

Now let $E=(x_{1}^{a}, \ldots , x_{n}^{a}) (x_{1}, \ldots , x_{n})$. Then $J^k=(E^k)^{[m]}$.

\begin{Proposition}
\label{soclepower}
With the notation introduced we have
\[
s(E^k)= \begin{cases}
a(k-1)+(a-1)n, & if\hspace{3mm} k \leq (n-1)(a-1);\\
(a+1)k-1, & if\hspace{3mm}k \geq (n-1)(a-1).
\end{cases}
\]
\end{Proposition}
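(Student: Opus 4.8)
The plan is to describe explicitly which monomials lie in $E^k$, and then read off $s(E^k)$ as the top degree of a monomial avoiding $E^k$.

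First I would record the structural observation that $E=\mm\,\mm^{[a]}$, so that $E$ is generated in the single degree $a+1$ by the monomials $x_ix_j^{a}$; consequently $E^k$ is generated (not necessarily minimally) in degree $(a+1)k$ by the monomials $x^{c+ab}$ with $b,c\in\NN^n$ and $|b|=|c|=k$, where $|v|:=v_1+\cdots+v_n$. Hence, for $d\in\NN^n$, the monomial $x^d$ lies in $E^k$ if and only if $d\geq c+ab$ (componentwise) for some such $b$ and $c$.

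The key step is the claim that, for $d\in\NN^n$,
\[
x^d\in E^k\quad\Longleftrightarrow\quad |d|\geq (a+1)k\ \text{ and }\ \sum_{i=1}^n\Big\lfloor \frac{d_i}{a}\Big\rfloor\geq k.
\]
Necessity is immediate: $E^k$ is generated in degree $(a+1)k$, and if $d\geq c+ab$ with $|b|=k$ then $b_i\leq d_i/a$ for every $i$, whence $\sum_i\lfloor d_i/a\rfloor\geq |b|=k$. For sufficiency, the condition $\sum_i\lfloor d_i/a\rfloor\geq k$ lets one pick $b\in\NN^n$ with $|b|=k$ and $ab_i\leq d_i$ for all $i$; setting $e_i:=d_i-ab_i\geq 0$ one has $\sum_ie_i=|d|-ak\geq k$ by the degree hypothesis, so one may choose $c\in\NN^n$ with $c\leq e$ and $|c|=k$, and then $d\geq ab+c$. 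The point worth emphasising — and the step I expect to be the main obstacle — is that the ``room'' $\sum_i(d_i-ab_i)$ left after fixing any legal $b$ equals $|d|-ak$, independently of $b$; this is exactly what makes the degree condition and the ``floor-sum'' condition decouple. Everything after this is elementary.

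Finally I would optimise. Negating the criterion, $x^d\notin E^k$ precisely when $|d|\leq (a+1)k-1$ or $\sum_i\lfloor d_i/a\rfloor\leq k-1$. In the first case the largest degree is $(a+1)k-1$, realised by $x_1^{(a+1)k-1}$. In the second case, writing $d_i=a\lfloor d_i/a\rfloor+r_i$ with $0\leq r_i\leq a-1$ gives $|d|\leq a(k-1)+(a-1)n$, with equality realised by $x_1^{a(k-1)+a-1}x_2^{a-1}\cdots x_n^{a-1}$. Therefore
\[
s(E^k)=\max\{\,(a+1)k-1,\ a(k-1)+(a-1)n\,\},
\]
and the one-line inequality $a(k-1)+(a-1)n\geq (a+1)k-1\iff k\leq(n-1)(a-1)$ splits this into the two cases of the statement (the two values agreeing at $k=(n-1)(a-1)$).
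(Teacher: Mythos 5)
Your proof is correct, and it takes a genuinely different route from the paper. The paper writes $E^k=A^k\mm^k=(A^k)_{\geq ak+k}$ with $A=(x_1^a,\ldots,x_n^a)$, computes $s(A^k)$ by viewing $A^k=(\mm^k)^{[a]}$ as a pseudo-Frobenius power and invoking its Lemmas 1.1--1.2 (the resolution of $H^{[r]}$ via the flat map $x_i\mapsto x_i^r$, giving $s(H^{[r]})=r(s(H)+n)-n$), and then observes that $s(E^k)$ is $s(A^k)$ when $s(A^k)>ak+k$ and $ak+k-1$ otherwise --- exactly your $\max\{(a+1)k-1,\,a(k-1)+(a-1)n\}$. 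You instead prove a self-contained combinatorial membership criterion, $x^d\in E^k$ iff $|d|\geq(a+1)k$ and $\sum_i\lfloor d_i/a\rfloor\geq k$, and optimise the degree of monomials violating it; your verification of the criterion (including the decoupling remark that $\sum_i(d_i-ab_i)=|d|-ak$ for any legal $b$) and your extremal monomials $x_1^{(a+1)k-1}$ and $x_1^{ak-1}x_2^{a-1}\cdots x_n^{a-1}$ are all correct, as is the final inequality $a(k-1)+(a-1)n\geq(a+1)k-1\iff k\leq(n-1)(a-1)$. What each approach buys: yours is elementary and makes no use of the Frobenius-power machinery, and your criterion in particular re-proves the fact (used later in the paper) that $E^k=\mm^{(a+1)k}$ for $k\geq(n-1)(a-1)$; the paper's argument is shorter given that Lemmas 1.1--1.2 are needed anyway for Corollary 1.4, where the socle degree of $J^k=(E^k)^{[m]}$ is deduced from $s(E^k)$ by the same rescaling lemma your argument bypasses here.
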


\begin{proof}
Let  $A=(x_{1}^{a}, \ldots , x_{n}^{a})$. Then, $E^k=A^k\mm^k=(A^k)_{\geq ak+k}$. Hence if $s(A^k)> ak+k$,  then $s(E^k)=s(A^k)$. Because $A^k=(\mm^k)^{[a]}$, we may apply Lemma~\ref{socle} and get  $s(A^k)=a(k-1+n)-n=a(k-1)+(a-1)n$. Since $ a(k-1)+(a-1)n =s(A^k)> ak+k$ if and only if  $k \leq (n-1)(a-1)$, we obtain the desired result for $k \leq (n-1)(a-1)$.

Now suppose  that $k > (n-1)(a-1)$. Then $s(A^k)< ak+k$. Therefore, $E^k=(A^k)_{\geq ak+k}=\mm^{ak+k}$. Since $s(\mm^{ak+k})=ak+k-1$, the  proof is completed.
\end{proof}

\begin{Corollary}\label{sp}
With the notation introduced we have
$$
s(J^k)= \begin{cases}
ma(k+n-1)-n, & if\hspace{3mm} k \leq (n-1)(a-1);\\
m(ak+k+n-1)-n, & if\hspace{3mm}k \geq(n-1)(a-1).
\end{cases}
$$
\end{Corollary}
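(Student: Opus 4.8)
The plan is to feed the formula of Proposition~\ref{soclepower} into Lemma~\ref{socle}. Recall from the line preceding Proposition~\ref{soclepower} that $J^k=(E^k)^{[m]}$, where $E=(x_1^a,\ldots,x_n^a)(x_1,\ldots,x_n)$. The ideal $E$ is $\mm$-primary: it contains $x_i^{a+1}$ for each $i$, hence so does every power $E^k$ (in fact $x_i^{k(a+1)}\in E^k$), so $E^k$ is an $\mm$-primary monomial ideal and Lemma~\ref{socle} applies with $H=E^k$ and $r=m$. This yields the single identity
\[
s(J^k)=s\bigl((E^k)^{[m]}\bigr)=m\bigl(s(E^k)+n\bigr)-n,
\]
valid for all $k\geq 1$.

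It then remains to substitute the two branches of Proposition~\ref{soclepower}. In the range $k\leq (n-1)(a-1)$ one has $s(E^k)=a(k-1)+(a-1)n=a(k+n-1)-n$, so
\[
s(J^k)=m\bigl(a(k+n-1)-n+n\bigr)-n=ma(k+n-1)-n,
\]
which is the first case of the Corollary. In the range $k\geq (n-1)(a-1)$ one has $s(E^k)=(a+1)k-1$, so
\[
s(J^k)=m\bigl((a+1)k-1+n\bigr)-n=m(ak+k+n-1)-n,
\]
which is the second case. Note that at the common value $k=(n-1)(a-1)$ both expressions for $s(E^k)$ agree (as already implicit in the statement of Proposition~\ref{soclepower}), so the two formulas for $s(J^k)$ also agree there, and the case split is consistent.

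There is essentially no obstacle here: once Proposition~\ref{soclepower} and Lemma~\ref{socle} are in hand, the Corollary is a one-line application of the relation $s(H^{[r]})=r(s(H)+n)-n$ together with the trivial arithmetic simplification $a(k-1)+(a-1)n=a(k+n-1)-n$. The only point worth stating explicitly is the verification that $E^k$ is $\mm$-primary and monomial, so that Lemma~\ref{socle} is legitimately applicable.
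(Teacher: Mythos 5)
Your proposal is correct and follows exactly the paper's route: the paper's proof is the one-line observation that $J^k=(E^k)^{[m]}$ together with Lemma~\ref{socle} and Proposition~\ref{soclepower}, and your substitution and simplification (plus the explicit check that $E^k$ is $\mm$-primary) just spell out the same argument.
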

\begin{proof}
Since $J^k=(E^k)^{[m]}$, the assertion follows from Lemma~\ref{socle} and Proposition~\ref{soclepower}.
\end{proof}

In the following statement we only consider the case $n=2$. 

\begin{Proposition}
\label{power}
Let $J=(x_{1}^{am},x_2^{am})(x_{1}^{m},x_{2}^{m}) \subset K[x_{1},x_{2}]$. Then
\[
\mu(J^k)=\mu(E^k)= \begin{cases}
(k+1)^2, & if\hspace{3mm} k \leq a-1;\\
(a+1)k+1, & if\hspace{3mm}k \geq a-1.
\end{cases}
\]
\end{Proposition}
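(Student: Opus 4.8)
The plan is to work directly with $E^k=(x_1^a,x_2^a)^k(x_1,x_2)^k\subset K[x_1,x_2]$, since $\mu(J^k)=\mu((E^k)^{[m]})=\mu(E^k)$ by Lemma~\ref{resolution} (the minimal number of generators is $\beta_0$, which is preserved under the pseudo-Frobenius operation). Set $A=(x_1^a,x_2^a)$. A monomial in $A^k$ has the form $x_1^{ai}x_2^{a(k-i)}$ times an arbitrary monomial, for $i=0,\dots,k$; in two variables $A^k$ is generated by the $k+1$ monomials $g_i=x_1^{ai}x_2^{a(k-i)}$, $i=0,\dots,k$, and every monomial of $A^k$ is divisible by exactly one of a suitable chain of these. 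Then $E^k=A^k\cdot\mm^k$, so a minimal generator of $E^k$ is a monomial $g_i\cdot h$ with $h\in\mm^k$ minimal with respect to divisibility among all monomials of $A^k\cdot\mm^k$. The first step is to make this precise: I would show that $E^k$ is minimally generated by monomials of the form $x_1^{ai+s}x_2^{a(k-i)+t}$ where $i$ ranges over $0,\dots,k$ and $(s,t)$ ranges over an appropriate ``staircase'' set depending on $i$.

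The key combinatorial observation is the following. A monomial $u=x_1^p x_2^q$ lies in $E^k$ iff there is some $i\in\{0,\dots,k\}$ with $p\ge ai$, $q\ge a(k-i)$ and $(p-ai)+(q-a(k-i))\ge k$, i.e. $p+q\ge ak+k$ together with $p\ge ai$ and $q\ge a(k-i)$ for some admissible $i$. Equivalently, writing $i_{\min}=\lceil p/a\rceil$ reversed — more cleanly: $u\in E^k$ iff $p+q\ge ak+k$ and $\lceil q/a\rceil + \lceil p/a\rceil$-type condition holds, namely there exists integer $i$ with $k-q/a\le i\le p/a$ and $0\le i\le k$, which amounts to $p+q\ge ak$ and $0\le k - \lceil q/a\rceil\le\lfloor p/a\rfloor$... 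The clean way: $u\in A^k$ iff $\lfloor p/a\rfloor+\lfloor q/a\rfloor\ge k$. So $u\in E^k$ iff $\lfloor p/a\rfloor+\lfloor q/a\rfloor\ge k$ \emph{and} $p+q\ge ak+k$. I would prove this equivalence first (the $A^k$ part is the standard description of powers of a monomial complete intersection in two variables; the $\mm^k$ part is immediate), and then count the minimal monomials of the monomial ideal defined by these two inequalities.

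Counting the minimal generators then splits along the two regimes. When $k\le a-1$, I claim the generators are exactly $x_1^{ai+s}x_2^{a(k-i)+t}$ with $i=0,\dots,k$ and $s+t=k$, $0\le s,t$ — but one must check which of these are genuinely minimal (not divisible by another). I expect that after removing redundancies one is left with a grid-like count giving $(k+1)^2$: heuristically there are $k+1$ choices of ``corner'' $g_i$ and, because $k\le a-1$ the blocks $[ai,a(i+1))\times\cdots$ do not overlap badly, each corner contributes a fresh antichain of $k+1$ minimal generators along the line of total degree $ak+k$, but adjacent corners share generators, and a careful inclusion count yields $(k+1)^2$. When $k\ge a-1$, the condition $\lfloor p/a\rfloor+\lfloor q/a\rfloor\ge k$ becomes the binding constraint over most of the relevant range and the minimal generators lie essentially on the ``staircase'' $\lfloor p/a\rfloor+\lfloor q/a\rfloor = k$ intersected with $p+q\ge ak+k$; counting lattice points on this staircase with the linear cutoff gives $(a+1)k+1$. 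I would do the two cases by directly listing the antichain of minimal monomials and counting, checking the two formulas agree at $k=a-1$ (both give $a^2$... indeed $(k+1)^2=a^2$ and $(a+1)k+1=(a+1)(a-1)+1=a^2$, consistent).

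The main obstacle I anticipate is the bookkeeping in identifying precisely which monomials on the boundary staircase are minimal generators versus which are divisible by a generator sitting on a lower step — i.e. correctly describing the antichain, not just the boundary of the Newton polyhedron. Concretely: among monomials $u=x_1^px_2^q$ with $\lfloor p/a\rfloor+\lfloor q/a\rfloor=k$ and $p+q\ge ak+k$, I must identify the minimal ones under divisibility, and the interaction between the ``floor'' staircase (which has horizontal and vertical segments of length $a$) and the straight line $p+q=ak+k$ (slope $-1$) is where the case distinction $k\lessgtr a-1$ originates and where an off-by-one error is easiest to make. I would handle this by fixing $j:=\lfloor p/a\rfloor$ (so $\lfloor q/a\rfloor=k-j$), writing $p=aj+s$, $q=a(k-j)+t$ with $0\le s,t\le a-1$, translating $p+q\ge ak+k$ into $s+t\ge k-(a-1)j-(a-1)(k-j)$... and then for each $j$ counting the minimal $(s,t)$, summing over $j=0,\dots,k$; the sum telescopes to the stated closed forms.
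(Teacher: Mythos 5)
Your reduction $\mu(J^k)=\mu(E^k)$ and your membership criterion ($x_1^px_2^q\in E^k$ if and only if $\lfloor p/a\rfloor+\lfloor q/a\rfloor\ge k$ and $p+q\ge (a+1)k$) are both correct, but the argument stops exactly where the content of the proposition begins: neither count is actually carried out ("I expect", "heuristically", "a careful inclusion count yields", "the sum telescopes"). For $k\le a-1$ the decisive point is much simpler than the staircase bookkeeping you anticipate: with $A=(x_1^a,x_2^a)$, the ideal $E^k=A^k\mm^k$ is generated in the single degree $(a+1)k$, so its minimal generating set is exactly the set of \emph{distinct} products $uv$ with $u\in G(A^k)$, $v\in G(\mm^k)$ (monomials of equal degree never divide one another), and the only issue is whether two such products coincide. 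Writing $u=x_1^{ia}x_2^{(k-i)a}$, $u'=x_1^{ja}x_2^{(k-j)a}$, $v=x_1^{r}x_2^{k-r}$, $v'=x_1^{s}x_2^{k-s}$, the equality $uv=u'v'$ forces $(i-j)a=s-r$ with $|s-r|\le k<a$, hence $i=j$ and $r=s$; so there are exactly $(k+1)(k+1)=(k+1)^2$ generators, and your worry about "adjacent corners sharing generators" evaporates. This is precisely the step you flag as the anticipated obstacle but never resolve, and it is the heart of the paper's proof.

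For $k\ge a-1$ your planned count is, as stated, not correct: the minimal generators are not the lattice points on the staircase $\lfloor p/a\rfloor+\lfloor q/a\rfloor=k$ cut off by the line $p+q=(a+1)k$. In this range the inequality $p+q\ge (a+1)k$ already implies $\lfloor p/a\rfloor+\lfloor q/a\rfloor\ge k$ (write $p=a\alpha+r$, $q=a\beta+s$ with $0\le r,s\le a-1$; then $a(\alpha+\beta)\ge (a+1)k-2(a-1)>a(k-1)$ when $k\ge a-1$), so $E^k=\mm^{(a+1)k}$ and the minimal generators are all $(a+1)k+1$ monomials of degree $(a+1)k$, many of which lie strictly above the staircase. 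The paper reaches the same identity $E^k=\mm^{(a+1)k}$ via the socle-degree computation of Proposition~\ref{soclepower}; one way or another such an argument must be supplied. As it stands the proposal sets up a workable framework but proves neither of the two closed formulas.
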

\begin{proof}
Since $J^k$ is a Frobenius power of $E^k$, we have $\mu(J^k)=\mu(E^k)$. Let $A=(x_1^a, x_2^a)$. Then $E^k=A^k\mm^k$, and  hence $E^k$ is generated by the set of monomials $\mathcal{S}=\{ uv \: u \in G(A^k), v \in G(\mm^k) \}$. Suppose that for monomials $u,u' \in G(A^k) $ and $v,v' \in G(\mm^k)$ we have $uv=u'v'$.  Let $ u=x_{1}^{ia}x_{2}^{(k-i)a}$ and $u'=x_{1}^{ja}x_{2}^{(k-j)a}$ for some $i,j \in \{0, \ldots, k \}$. Also let   $ v=x_{1}^{r}x_{2}^{k-r}$
and $ v'=x_{1}^{s}x_{2}^{k-s}$  for some $r,s \in \{0, \ldots, k \}$.  We may assume that $i\geq j$. Then $s\geq r$, and moreover  $ia+r=ja+s$. Therefore, $(i-j)a=s-r \leq k$. Hence if $k<a$, then  $r=s$ and $i=j$. This implies that  
\[
\mu(E^k)=|\mathcal{S}|=\mu(A^k)\mu(m^k)=(k+1)(k+1)=(k+1)^2.
\]

Now, let $k > a-1$.  By Proposition~\ref{soclepower}, we have $s(E^k) = (a+1)k-1$. Since $E^k$ is generated in degree $ka+k> s(E^k)$, it follows that  $E^k=\mm^{ak+k}$. Hence,
$\mu(E^k)=\mu(\mm^{ak+k})=ak+k+1$.
\end{proof}

\section{On the powers of the modified basic ideals}
In this section we let $a \geq 3$ and $m \geq 1$ be integers, and set
\begin{eqnarray}
\label{professor}
 J=(x_{1}^{am}, \ldots , x_{n}^{am}) (x_{1}^{m}, \ldots , x_{n}^{m}) \quad \text{and} \quad I=J+\mm^c,
\end{eqnarray}
where $c> d=(a+1)m$. Note that $J$  is generated in  degree $d$. 

\begin{Proposition}
\label{necessary}
Consider the ideal $J$ as we introduced above, and suppose that $I=J+\mm^{c}$  where $c>d$. Let $k\leq (n-1)(a-1)$. Then, $I^k=J^k$, if  $c\geq s(J)-(k-1)m+1$.
\end{Proposition}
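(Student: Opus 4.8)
The plan is to expand $I^k$ as a sum of products $J^{k-i}\mm^{ci}$, observe that all the ``mixed'' terms (those with $i\geq 1$) lie in a single power $\mm^{N}$ of the maximal ideal, and then use the socle-degree computation of Corollary~\ref{sp} to show that this power of $\mm$ is already contained in $J^k$.

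First I would write $I^k=(J+\mm^c)^k=\sum_{i=0}^k J^{k-i}\mm^{ci}$, so that $J^k\subseteq I^k$ and equality is equivalent to $J^{k-i}\mm^{ci}\subseteq J^k$ for $i=1,\dots,k$. Since $J$ is generated in the single degree $d=(a+1)m$, the ideal $J^{k-i}\mm^{ci}$ is generated in degree $(k-i)d+ci=kd+i(c-d)$; as $c>d$ this is at least $(k-1)d+c$ for every $i\geq 1$, so $J^{k-i}\mm^{ci}\subseteq\mm^{(k-1)d+c}$. Hence it suffices to prove the single containment $\mm^{(k-1)d+c}\subseteq J^k$.

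Next, $J$ (and therefore $J^k$) is $\mm$-primary, since it contains $x_\ell^{d}$ for every $\ell$; thus $\mm^{t}\subseteq J^k$ whenever $t>s(J^k)$. Consequently $\mm^{(k-1)d+c}\subseteq J^k$ as soon as $(k-1)d+c\geq s(J^k)+1$, i.e. $c\geq s(J^k)+1-(k-1)d$. Finally I would invoke Corollary~\ref{sp}: since $k\leq(n-1)(a-1)$ we have $s(J^k)=ma(k+n-1)-n$, while for $k=1$ the same corollary gives $s(J)=man-n$; a direct computation (using $d=(a+1)m$) shows $s(J^k)+1-(k-1)d=s(J)-(k-1)m+1$, which is exactly the stated hypothesis on $c$. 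This completes the argument.

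There is no serious obstacle here. The only points that need care are getting the direction of the socle-degree inequality right (namely $\mm^t\subseteq J^k$ for $t$ \emph{strictly} larger than the socle degree) and carrying out the final arithmetic identity correctly; the key simplification is noticing that among the cross terms $J^{k-i}\mm^{ci}$ only the one with $i=1$ is binding, because $c>d$ pushes all the others into even higher powers of $\mm$.
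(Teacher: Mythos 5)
Your proof is correct and follows essentially the same route as the paper: you expand $I^k=\sum_i J^{k-i}\mm^{ci}$, use $c>d$ to push all mixed terms into $\mm^{(k-1)d+c}$, and reduce to the socle-degree condition $(k-1)d+c>s(J^k)$, which Corollary~\ref{sp} converts into $c\geq s(J)-(k-1)m+1$. You merely make explicit some steps the paper leaves implicit (that $J^k$ is $\mm$-primary, that the $i=1$ term is the binding one, and the final arithmetic), so there is nothing to correct.
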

\begin{proof}
We have  $I^k=J^k$, if  $J^{k-1}\mm^{c}+ \ldots +J(\mm^{c})^{k-1}+ (\mm^{c})^k \subset J^k$. Since $J^{k-1}\mm^{c}$ is the summand with the least degree, then we have $I^k=J^k$, if $(k-1)d+c>s(J^k)$. Using the first equality in Proposition~\ref{sp}, we have $c\geq s(J)-(k-1)m+1$.
\end{proof}

\begin{Proposition}
\label{second}
Let ideals $J$ and $I$ be as in \eqref{professor}. 
\begin{enumerate}[{\rm(a)}]
\item If  $c\geq d+(n-1)(m-1)$, then $I^k=J^k$ for all $k\geq (n-1)(a-1)$.

\item  $I^k=J^k$ for some $k$ if and only if  $c\geq d+(n-1)(m-1)$.\end{enumerate}

\end{Proposition}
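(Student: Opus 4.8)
The plan is to reduce both parts to controlling the summands in $I^k=(J+\mm^c)^k=J^k+\sum_{i=1}^{k}J^{k-i}\mm^{ci}$, using that $J^k$ is $\mm$-primary (it contains every $x_i^{(a+1)m}$) with the socle degree recorded in Corollary~\ref{sp}. Part (a) is a socle-degree comparison, the implication ``$\Leftarrow$'' of (b) is immediate from (a), and the implication ``$\Rightarrow$'' of (b) will be proved by contraposition, exhibiting an explicit monomial of $I^k\setminus J^k$.

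\emph{Part (a).} Trivially $J^k\subseteq I^k$. For $i\geq1$ the summand $J^{k-i}\mm^{ci}$ is contained in $\mm^{(k-i)d+ci}$, since $J$ is generated in degree $d$. For $k\geq(n-1)(a-1)$, Corollary~\ref{sp} gives $s(J^k)=m(ak+k+n-1)-n=kd+(m-1)(n-1)-1$ (here one just checks $m(n-1)-n=(m-1)(n-1)-1$). Under the hypothesis $c\geq d+(m-1)(n-1)$ we have $c-d\geq(m-1)(n-1)\geq0$, so for every $i\geq1$
\[
(k-i)d+ci=kd+i(c-d)\;\geq\;kd+(c-d)\;\geq\;kd+(m-1)(n-1)\;>\;s(J^k),
\]
whence $J^{k-i}\mm^{ci}\subseteq\mm^{\,s(J^k)+1}\subseteq J^k$. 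Therefore $I^k=J^k$ for all $k\geq(n-1)(a-1)$.

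\emph{Part (b).} ``$\Leftarrow$'' follows from part (a), since then $I^k=J^k$ for every $k\geq(n-1)(a-1)$. For ``$\Rightarrow$'' assume $c\leq d+(m-1)(n-1)-1$; I claim $I^k\neq J^k$ for every $k\geq1$. Since $d<c\leq(d-1)+(n-1)(m-1)$, we may split $c=e_1+\dots+e_n$ with $0\leq e_1\leq d-1$ and $0\leq e_i\leq m-1$ for $i\geq2$. Put $v=x_1^{e_1}\cdots x_n^{e_n}\in\mm^c$ and $g=x_1^{(a+1)(k-1)m}=(x_1^{(a+1)(k-1)})^m$; as $x_1^{(a+1)(k-1)}=(x_1^a)^{k-1}x_1^{k-1}\in E^{k-1}$ has the generating degree $(a+1)(k-1)$, it is a minimal generator of $E^{k-1}$, so $g\in J^{k-1}=(E^{k-1})^{[m]}$ and $gv\in J^{k-1}\mm^c\subseteq I^k$. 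Suppose $gv\in J^k=(E^k)^{[m]}$. Then $w^m\mid gv$ for some $w=x_1^{p_1}\cdots x_n^{p_n}\in G(E^k)$; comparing $x_i$-exponents for $i\geq2$ yields $mp_i\leq e_i\leq m-1$, hence $p_i=0$, so $w$ is a power of $x_1$. Since $E^k$ is generated in the single degree $(a+1)k$, necessarily $w=x_1^{(a+1)k}$, and then $w^m=x_1^{(a+1)km}\mid gv$ forces $(a+1)km\leq(a+1)(k-1)m+e_1$, i.e.\ $e_1\geq(a+1)m=d$, contradicting $e_1\leq d-1$. Hence $gv\notin J^k$, so $I^k\neq J^k$.

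The routine part is the bookkeeping with $s(J^k)$: identifying $m(ak+k+n-1)-n$ with $kd+(m-1)(n-1)-1$ and attending to the boundary value $k=(n-1)(a-1)$ where the two branches of Corollary~\ref{sp} agree. The genuine point in (b) is the choice of the splitting $c=e_1+\dots+e_n$: the constraints $e_1\leq d-1$ and $e_i\leq m-1$ for $i\geq2$ are simultaneously satisfiable exactly when $c\leq d+(m-1)(n-1)-1$, which is precisely the sharp threshold, and it is this arithmetic—combined with the fact that $x_1^{(a+1)k}$ is the only minimal generator of $E^k$ supported on $x_1$ alone—that forces $gv$ out of $J^k$. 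Verifying that no $w\in G(E^k)$ can have $w^m\mid gv$ is the step most sensitive to getting the splitting of $c$ right, and is the main obstacle to be handled carefully.
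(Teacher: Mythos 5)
Your proof is correct and follows essentially the same route as the paper: part (a) is the same socle-degree comparison via Corollary~\ref{sp}, and in (b) the converse is obtained, as in the paper, by exhibiting a monomial of $J^{k-1}\mm^{c}\setminus J^{k}$ and ruling out divisibility by the generators of $J^k$ through an exponent analysis in the variables $x_2,\ldots,x_n$. The only difference is cosmetic: you take a witness $x_1^{(k-1)d+e_1}x_2^{e_2}\cdots x_n^{e_n}$ built from a splitting of $c$ of degree exactly $c$, while the paper uses the single extremal monomial $x_1^{kd-1}x_2^{m-1}\cdots x_n^{m-1}$ for all admissible $c$.
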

\begin{proof}
(a) Since $c\geq d+(n-1)(m-1)$, it follows from Proposition~\ref{sp} that $(k-1)d+c>s(J^k)$ for $k\geq (n-1)(a-1)$. Therefore, $I^k=J^k$ for $k\geq (n-1)(a-1)$.

 (b) Because of (a) we only need to show that $c\geq d+(n-1)(m-1)$ if $I^k=J^k$ for some $k$. Assume $c<d+(n-1)(m-1)$. Then, we show that $I^k \neq J^k$ for all $k$, which will be a contradiction. It is enough to prove $I^k \neq J^k$ for all $k$ when $c=d+(n-1)(m-1)-1$. Let $u=x_{1}^{kd-1} x_{2}^{m-1}\ldots x_{n}^{m-1}$. Then we have
 \[
 u =(x_{1}^{(k-1)d})(x_{1}^{d-1} x_{2}^{m-1}\ldots x_{n}^{m-1})\in J^{k-1}\mm^{c},
\]
   but $u \notin J^k$ because, any monomial in $J^k$, is generated by an element of the form $v=(x_{1}^{k_1am} \ldots x_{n}^{k_nam})(x_{1}^{j_1m} \ldots x_{n}^{j_nm})$ with $k_1+\cdots +k_n=k$ and $j_1+\cdots +j_n=k$.    Therefore, $J^{k-1}\mm^{c} \nsubseteq J^k$, and so $I^k \neq J^k$.
\end{proof}

By Proposition~\ref{second}, if $I^k=J^k$ for some $k$, then $I^k=J^k$ for all $k>(n-1)(a-1)$. The next result gives a more precise statement about the smallest number $k$ for which $I^k=J^k$.
\begin{Proposition}
\label{equality}
Let $ J$ and $I$ be as in \eqref{professor}. Then, for $2 \leq k \leq (n-1)(a-1)$ we have  $I^k=J^k$ and $I^{k-1}\neq  J^{k-1}$, if and only if
\begin{eqnarray}
\label{interval}
s(J) -(k-1)m+1 \leq c \leq s(J) -(k-2)m.
\end{eqnarray}
\end{Proposition}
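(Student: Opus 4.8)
The plan is to characterize, for each $k$ in the range $2 \leq k \leq (n-1)(a-1)$, the smallest exponent $j$ with $I^j = J^j$, and then show that this smallest $j$ equals $k$ precisely when $c$ lies in the interval \eqref{interval}. The key observation is that $I^j = J^j$ amounts to the containment $J^{j-1}\mm^c + \cdots + (\mm^c)^j \subseteq J^j$, and among these summands $J^{j-1}\mm^c$ has the least degree $(j-1)d + c$; so the containment holds if and only if this least-degree summand is contained in $J^j$, which (since $J^j$ agrees with $\mm^{s(J^j)+1}$ in high degrees, being $\mm$-primary and generated in the fixed degree $jd$) is controlled by comparing $(j-1)d + c$ with $s(J^j)$.

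The main work is to pin down exactly when $J^{j-1}\mm^c \subseteq J^j$. First I would establish that this containment holds if and only if $(j-1)d + c \geq s(J^j) + 1$, i.e. $(j-1)d + c > s(J^j)$. The ``if'' direction is Proposition~\ref{necessary}'s argument: every monomial of degree $> s(J^j)$ lies in $J^j$ because $J^j$ is $\mm$-primary with socle degree $s(J^j)$. For the ``only if'' direction I would exhibit an explicit obstruction monomial of degree exactly $s(J^j)$ (or of each degree between $(j-1)d+c$ and $s(J^j)$) that lies in $J^{j-1}\mm^c$ but not in $J^j$ — analogous to the monomial $u = x_1^{kd-1}x_2^{m-1}\cdots x_n^{m-1}$ used in the proof of Proposition~\ref{second}(b). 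Concretely, the socle of $S/J^j$ is spanned by monomials of the shape $(x_1^{k_1 am}\cdots x_n^{k_n am})(x_1^{j_1 m}\cdots x_n^{j_n m})\cdot w$ where $\sum k_i = \sum j_i = j$ and $w$ is the ``complementary'' socle monomial; one checks such a monomial factors through $J^{j-1}\mm^c$ as soon as $c \leq s(J^j) - (j-1)d$, giving a generator of $I^j$ not in $J^j$. Using the formula $s(J^j) = ma(j+n-1) - n$ from Corollary~\ref{sp} (valid since $j \leq k \leq (n-1)(a-1)$) together with $d = (a+1)m$, the inequality $(j-1)d + c > s(J^j)$ rearranges to $c > s(J) - (j-1)m$, equivalently $c \geq s(J) - (j-1)m + 1$ — note $s(J) - (j-1)m = ma(n) - n - (j-1)m$, and the same rearrangement with $s(J^j)$ replaced using the corollary is a short computation.

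Granting this, the statement falls out by a monotonicity argument. The threshold $c_j := s(J) - (j-1)m + 1$ below which $I^j \neq J^j$ is strictly decreasing in $j$ (step size $m$), so $I^j = J^j$ holds exactly for $j \geq$ some cutoff determined by $c$, at least within the range where the first branch of Corollary~\ref{sp} applies. Therefore $I^k = J^k$ and $I^{k-1} \neq J^{k-1}$ holds if and only if $c \geq c_k = s(J) - (k-1)m + 1$ and $c < c_{k-1} = s(J) - (k-2)m + 1$, i.e. $c \leq s(J) - (k-2)m$, which is exactly \eqref{interval}. I would also remark that the hypothesis $k \leq (n-1)(a-1)$ is what guarantees we stay in the first case of Corollary~\ref{sp} for all relevant exponents $\leq k$, so the socle-degree formula used in the rearrangement is the right one throughout.

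The step I expect to be the main obstacle is the ``only if'' direction of $J^{j-1}\mm^c \subseteq J^j \iff (j-1)d + c > s(J^j)$ — specifically, producing an explicit monomial that sits in $J^{j-1}\mm^c \setminus J^j$ for each degree in the gap, not just at the top socle degree, since $I^{j} = J^{j}$ requires the containment in \emph{all} degrees. This needs a careful combinatorial description of the monomials in $J^j$ (the exponent constraints $\sum k_i = \sum j_i = j$) and a verification that the candidate monomial, written as (a generator of $J^{j-1}$) times (a degree-$c$ monomial), genuinely fails every such factorization through $J^j$; the argument in Proposition~\ref{second}(b) handles the extreme case and should extend, but bookkeeping the intermediate degrees is where the care is needed.
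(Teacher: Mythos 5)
Your overall route is essentially the paper's: reduce $I^j=J^j$ to the containment of the lowest-degree summand $J^{j-1}\mm^c$ in $J^j$, compare its initial degree $(j-1)d+c$ with $s(J^j)$, and translate via Corollary~\ref{sp} into the thresholds $c_j=s(J)-(j-1)m+1$; applying this at $j=k$ and $j=k-1$ gives \eqref{interval}. You are also right that a genuine ``if and only if'' needs the converse half, namely that $(j-1)d+c\le s(J^j)$ forces $J^{j-1}\mm^c\not\subseteq J^j$ --- a point the paper's own proof (which only invokes Proposition~\ref{necessary} and the inequality $(k-2)d+c\le s(J^{k-1})$) leaves implicit. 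One correction of scope, though: only \emph{one} monomial of $J^{j-1}\mm^c$ outside $J^j$ is needed, not one in each degree of the gap, since a single such monomial already shows $I^j\neq J^j$ because $J^j\subseteq I^j$; the ``all degrees'' requirement concerns the opposite implication, which is settled by the socle-degree bound $\mm^{s(J^j)+1}\subseteq J^j$.

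The genuine gap is in the concrete obstruction you propose, which is exactly the step you flag as the main obstacle. Monomials of the shape $(x_1^{k_1am}\cdots x_n^{k_nam})(x_1^{j_1m}\cdots x_n^{j_nm})\cdot w$ with $\sum_i k_i=\sum_i j_i=j$ are divisible by a minimal generator of $J^j$, hence lie in $J^j$ and vanish in $S/J^j$; they can neither span the socle of $S/J^j$ nor witness $I^j\neq J^j$. A correct choice, in the spirit of Proposition~\ref{second}(b), is $w=x_1^{(ab_1+a)m-1}\cdots x_n^{(ab_n+a)m-1}$ with $\sum_i b_i=j-1$: dividing the exponents first by $m$ and then by $a$ shows $w\notin J^j$, its degree is $ma(j+n-1)-n=s(J^j)$ for $j\le (n-1)(a-1)$, and $w$ is divisible by the generator $\prod_i x_i^{(b_ia+j_i)m}$ of $J^{j-1}$ for any composition $j_1+\cdots+j_n=j-1$ with all $j_i\le a-1$ (such a composition exists since $j-1<n(a-1)$); the cofactor then has degree $s(J^j)-(j-1)d\ge c$ under your hypothesis, so $w\in J^{j-1}\mm^c\setminus J^j$. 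Beware also that the naive analogue of Proposition~\ref{second}(b), factoring through the pure power $x_1^{(j-1)d}$, fails once $j\ge a+1$ (which can happen for $n\ge 3$), so the balanced choice of generator of $J^{j-1}$ is really needed. With this obstruction lemma supplied, your threshold and monotonicity argument is correct and yields \eqref{interval}.
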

\begin{proof}
The first inequality is proven in Proposition~\ref{necessary}.
For the second inequality we have $I^{k-1}\neq J^{k-1}$ if and only if  $J^{k-2}\mm^c+ \ldots +J(\mm^c)^{k-2}+ (\mm^c)^{k-1} \not \subset J^{k-1}$. Then to have $I^{k-1}\neq J^{k-1}$, we must have
\begin{align*}
(k-2)d+c \leq s(J^{k-1}).
\end{align*}
Using the first equality in Proposition~\ref{sp}, we have $c\leq s(J)-(k-2)m$.
\end{proof}

\begin{Lemma}
\label{need}
Let $S=K[x_1, \ldots , x_n]$  and  $a , k,c\geq 1$ be integers. Then,
\[
(x_{1}^{a}, \ldots , x_{n}^{a})^k\mm^c=\mm^{ka+c}
\]
if and only if $c \geq (n-1)(a-1)$.
\end{Lemma}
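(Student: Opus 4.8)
The plan is to translate the equality of ideals into a purely combinatorial statement about exponent vectors and then isolate the extremal configuration. Set $A=(x_1^a,\dots,x_n^a)$, so $G(A^k)=\{x_1^{ak_1}\cdots x_n^{ak_n}\mid k_i\ge 0,\ \sum_i k_i=k\}$; the inclusion $A^k\mm^c\subseteq\mm^{ka+c}$ is clear since every generator of $A^k\mm^c$ has degree $ka+c$. As both $A^k\mm^c$ and $\mm^{ka+c}$ are monomial ideals generated in the single degree $ka+c$, equality holds exactly when every monomial $x^\gamma$ with $|\gamma|=ka+c$ lies in $A^k\mm^c$. For such a $\gamma$, one has $x^\gamma\in A^k\mm^c$ if and only if $x^\gamma$ is divisible by some minimal generator of $A^k$ (the complementary factor then automatically has degree $c$, hence lies in $\mm^c$), and, choosing the $k_i$ greedily, this in turn holds if and only if $\sum_{i=1}^n\lfloor\gamma_i/a\rfloor\ge k$. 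So the lemma reduces to showing that $\sum_i\lfloor\gamma_i/a\rfloor\ge k$ for all $\gamma\in\NN^n$ with $|\gamma|=ka+c$, precisely when $c\ge(n-1)(a-1)$.

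For the direction ``$c\ge(n-1)(a-1)$ forces equality'', I would argue by contradiction using division with remainder. Write $\gamma_i=aq_i+r_i$ with $0\le r_i\le a-1$, so $\lfloor\gamma_i/a\rfloor=q_i$ and $\sum_i r_i\le n(a-1)$. If $\sum_i q_i\le k-1$, then $\sum_i r_i=(ka+c)-a\sum_i q_i\ge(ka+c)-a(k-1)=c+a\ge(n-1)(a-1)+a=n(a-1)+1$, contradicting $\sum_i r_i\le n(a-1)$. Hence $\sum_i q_i\ge k$, and $x^\gamma\in A^k\mm^c$.

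For the opposite direction I prove the contrapositive: assume $c\le(n-1)(a-1)-1$, equivalently $c+a\le n(a-1)$. Since $0\le c+a\le n(a-1)$, I can pick remainders $r_1,\dots,r_n\in\{0,\dots,a-1\}$ with $\sum_i r_i=c+a$; put $\gamma_1=a(k-1)+r_1$ (legitimate as $k\ge 1$) and $\gamma_i=r_i$ for $i\ge 2$. Then $|\gamma|=a(k-1)+(c+a)=ka+c$, yet $\sum_i\lfloor\gamma_i/a\rfloor=(k-1)+0=k-1<k$, so $x^\gamma$ belongs to $\mm^{ka+c}$ but not to $A^k\mm^c$, and the two ideals differ.

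I expect the only subtle point to be the ``if'' direction. A naive degree count of the form $a\sum_i q_i=(ka+c)-\sum_i r_i\ge ka+c-n(a-1)$, hence $\sum_i q_i\ge k+(c-n(a-1))/a$, is too weak, as it would require $c\ge n(a-1)$. The right observation is that the worst case for extracting $k$ factors of $A$ out of $x^\gamma$ occurs when all remainders are maximal, $r_i=a-1$; this is exactly the configuration that produces the threshold $c+a>n(a-1)$, i.e. $c\ge(n-1)(a-1)$, and the remainder bookkeeping above is just the clean way to make this precise. Everything else is routine once the problem is phrased in exponent-vector terms.
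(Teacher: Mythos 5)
Your proof is correct, but it takes a genuinely different route from the paper. The paper's proof is a two-line reduction to machinery already set up in Section~1: since $A^k=(\mm^k)^{[a]}$ is a pseudo-Frobenius power, Lemma~\ref{socle} (itself resting on Lemma~\ref{resolution}, i.e.\ flat base change along $x_i\mapsto x_i^a$ and Koszul/Tor considerations) gives $s(A^k)=a(k-1+n)-n$, and then $A^k\mm^c=(A^k)_{\geq ak+c}=\mm^{ak+c}$ holds iff $ak+c\geq s(A^k)+1$, which unwinds to $c\geq(n-1)(a-1)$. You instead argue entirely combinatorially on exponent vectors: you reduce equality to the statement that every $\gamma\in\NN^n$ with $|\gamma|=ka+c$ satisfies $\sum_i\lfloor\gamma_i/a\rfloor\geq k$ (the reduction via ``both ideals are generated in degree $ka+c$'' and the greedy choice of the $k_i$ is correct), prove the ``if'' direction by the remainder estimate $\sum_i r_i\leq n(a-1)$ versus $\sum_i r_i\geq c+a$, and prove the ``only if'' direction by explicitly exhibiting a monomial with residues summing to $c+a\leq n(a-1)$ that lies in $\mm^{ka+c}$ but not in $A^k\mm^c$. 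In effect your extremal configuration (all residues equal to $a-1$) is the socle element of $A^k$ in disguise, so you are re-deriving the relevant special case of Lemma~\ref{socle} by hand. What your approach buys is a self-contained, elementary argument needing no free resolutions or socle formulas; what the paper's approach buys is brevity and uniformity, since the same socle machinery is reused in Proposition~\ref{soclepower}, Corollary~\ref{sp} and the subsequent results. Both arguments are complete; I see no gap in yours.
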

\begin{proof}
Let $F=(x_{1}^{a}, \ldots , x_{n}^{a})^k$. Then $(x_{1}^{a}, \ldots , x_{n}^{a})^k\mm^c=\mm^{ka+c}$ if and only if $F_{\geq ak+c}=\mm^{ka+c}$, which is the case if and only if $ak+c\geq s(F)+1$. The desired conclusion follows from Lemma~\ref{socle}.
\end{proof}

\begin{Lemma}
\label{difference}
Let $I$ and $J$ be as in \eqref{professor}  and $1 \leq k \leq (n-1)(a-1)$.  Then, for $c \geq s(J) -(k-1)m+1$ we have
\begin{enumerate}[{\rm(a)}]
\item $(x_{1}^{m}, \ldots , x_{n}^{m})^k\mm^{c-d}=\mm^{km+c-d}$.

\item $J^k\mm^{c-d}=(x_{1}^{am}, \ldots , x_{n}^{am})^k\mm^{km+c-d}$.

\item $J^{k-1}\mm^c=\mm^{(k-1)d+c}$.

\item $I^k=J^k+J^{k-1}\mm^{c}$.

\end{enumerate}
\end{Lemma}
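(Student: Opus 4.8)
The plan is to prove the four assertions in order, using Lemma~\ref{need} for (a) and (b), a socle‑degree count for (c), and the expansion of $(J+\mm^c)^k$ together with (c) for (d).

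First I would record the consequences of the standing hypotheses that will drive everything. Writing $d=(a+1)m$ and applying Corollary~\ref{sp} with exponent $1$ gives $s(J)=(ma-1)n$, so the assumption $c\ge s(J)-(k-1)m+1$ together with $k\le (n-1)(a-1)$ yields, after elementary manipulations, the two inequalities
\[
c-d \ge (n-1)(m-1)\qquad\text{and}\qquad (k-1)d+c > s(J^{k-1}).
\]
For the first, $c-d-(n-1)(m-1)\ge m\bigl[(n-1)(a-1)-k\bigr]\ge 0$. For the second, note $k-1\le (n-1)(a-1)$, so Corollary~\ref{sp} gives $s(J^{k-1})=ma(k+n-2)-n$, and the bound on $c$ then forces the strict inequality (in fact with a gap of $am$). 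The case $k=1$ is degenerate, with (a)--(d) reducing to trivialities, so I may assume $k\ge 2$ when convenient.

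Part (a) is then a direct application of Lemma~\ref{need} to the ideal $(x_1^m,\ldots,x_n^m)^k\mm^{c-d}$ (note $c-d\ge 1$ since $c>d$): that lemma requires $c-d\ge (n-1)(m-1)$, which we have, and gives $(x_1^m,\ldots,x_n^m)^k\mm^{c-d}=\mm^{km+c-d}$. Part (b) follows by writing $J^k=(x_1^{am},\ldots,x_n^{am})^k(x_1^m,\ldots,x_n^m)^k$, multiplying by $\mm^{c-d}$, and collapsing the last two factors via (a). For (c), since $J^{k-1}$ is generated in the single degree $(k-1)d$ we have $J^{k-1}\mm^c=(J^{k-1})_{\ge (k-1)d+c}$, and this equals $\mm^{(k-1)d+c}$ exactly when $(k-1)d+c>s(J^{k-1})$, which is the second inequality recorded above. (Alternatively (c) can be deduced from (a): factor $\mm^c=\mm^{c-d}\mm^d$, use the computation of (a) with $k-1$ in place of $k$ to rewrite $J^{k-1}\mm^c$ as $(x_1^{am},\ldots,x_n^{am})^{k-1}\mm^{(k-1)m+c}$, and collapse this with a second application of Lemma~\ref{need}.)

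Finally, for (d), expand
\[
I^k=(J+\mm^c)^k=\sum_{i=0}^{k}J^{k-i}\mm^{ic}=J^k+J^{k-1}\mm^c+\sum_{i=2}^{k}J^{k-i}\mm^{ic}.
\]
For $i\ge 2$ one has $J^{k-i}\mm^{ic}\subseteq \mm^{(k-i)d+ic}$ because $J^{k-i}\subseteq \mm^{(k-i)d}$, and
\[
(k-i)d+ic-\bigl((k-1)d+c\bigr)=(i-1)(c-d)\ge c-d\ge 1,
\]
so by (c) each of these terms lies in $\mm^{(k-1)d+c}=J^{k-1}\mm^c$; hence $I^k\subseteq J^k+J^{k-1}\mm^c$. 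The reverse inclusion is clear since $J\subseteq I$ and $\mm^c\subseteq I$. I do not anticipate a genuine obstacle: the argument is pure bookkeeping, and the only points needing care are the two displayed numerical inequalities, the harmless degenerate case $k=1$, and the routine observation that it is the first‑case formula of Corollary~\ref{sp} that governs the exponent $k-1$ (which it does, as $k-1\le (n-1)(a-1)$).
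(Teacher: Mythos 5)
Your proposal is correct and follows essentially the same route as the paper: (a) via Lemma~\ref{need} using $c-d\ge (n-1)(m-1)$, (b) as an immediate consequence of (a), (c) by comparing $(k-1)d+c$ with $s(J^{k-1})$ through Corollary~\ref{sp}, and (d) by expanding $(J+\mm^c)^k$ and absorbing the summands with $i\ge 2$ into $\mm^{(k-1)d+c}=J^{k-1}\mm^c$. The only difference is that you spell out the two numerical inequalities (and the harmless $k=1$ case) that the paper leaves implicit, which is a welcome but not essential addition.
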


\begin{proof}

(a) Since $c-d \geq (n-1)(m-1)$ by the assumption, the proof follows from Lemma~\ref{need}.

(b)follows from (a).

(c) Note that $J^{k-1}\mm^{c}= \mm^{(k-1)d+c}$ if and only if $(k-1)d+c >s(J^{k-1})$. Thus the statement follows from  Corollary~\ref{sp} and the assumption $c \geq s(J) -(k-1)m+1$.

(d) We have $I^k=J^k+J^{k-1}\mm^c+ \ldots +J(\mm^c)^{k-1}+(\mm^c)^k $.  On the other hand, we have $J^{k-1}\mm^c=\mm^{(k-1)d+c}$ by (c). Hence, $I^k=J^k+J^{k-1}\mm^c$ because the other summands are contained in $\mm^{(k-1)d+c}$, since they are generated in degree greater than $(k-1)d+c$.
\end{proof}
 The ideals as in \eqref{professor} depend on three parameters: $a$, $m$ and $c$. In the following we fix $n=2$ and let $c=s(J)-(a-2)m+1=d+m-1=(a+2)m-1$. For these choices of $n$ and $c$ we denote the ideal defined in \eqref{professor} by $I_{a,m}$. In other words, $I_{a,m}=(x_{1}^{am},x_2^{am})(x_{1}^{m},x_{2}^{m})+(x_1,x_2)^{(a+2)m-1}$.

\begin{Proposition}
\label{total}
Let as before $a \geq 3$ and $m \geq 1$. Then
\[
\mu(I_{a,m}^k)= \begin{cases}
(1-m)k^2+(am-m+2)k+1, & if\hspace{3mm} k \leq a-1;\\
(a+1)k+1, & if\hspace{3mm}k \geq a-1.
\end{cases}
\]
\end{Proposition}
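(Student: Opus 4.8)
The plan is to treat the ranges $k\ge a-1$ and $1\le k\le a-1$ separately; they meet at $k=a-1$, where both proposed formulas equal $a^2$. For $k\ge a-1$ the statement is essentially immediate: with $n=2$ we have $c=(a+2)m-1=d+(m-1)=d+(n-1)(m-1)$, so Proposition~\ref{second}(a) gives $I_{a,m}^k=J^k$ for all $k\ge(n-1)(a-1)=a-1$, and Proposition~\ref{power} then yields $\mu(I_{a,m}^k)=\mu(J^k)=(a+1)k+1$.

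Now let $1\le k\le a-1$; we may assume $m\ge2$, since for $m=1$ one has $c=d$ and hence $I_{a,1}=\mm^{a+1}$, for which $\mu(I_{a,1}^k)=(a+1)k+1$ agrees with both branches. The first step is to prove $I_{a,m}^k=J^k+\mm^{kd+m-1}$. Expanding $I_{a,m}^k=\sum_{i=0}^kJ^{k-i}\mm^{ci}$, the summand of index $i$ is equigenerated in degree $(k-i)d+ci=kd+i(m-1)$, so for $i\ge1$ it is contained in $\mm^{kd+m-1}$; conversely, the summand with $i=1$, namely $J^{k-1}\mm^c$, already equals $\mm^{(k-1)d+c}=\mm^{kd+m-1}$. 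I would obtain this last identity by writing $J^{k-1}=(x_1^{am},x_2^{am})^{k-1}(x_1^m,x_2^m)^{k-1}$ and applying Lemma~\ref{need} twice: first $(x_1^m,x_2^m)^{k-1}\mm^c=\mm^{(k-1)m+c}$ since $c\ge m-1$, then $(x_1^{am},x_2^{am})^{k-1}\mm^{(k-1)m+c}=\mm^{(k-1)(a+1)m+c}=\mm^{(k-1)d+c}$ since $(k-1)m+c\ge am-1$. (Alternatively one checks $(k-1)d+c>s(J^{k-1})$ directly from Corollary~\ref{sp}.) Hence $I_{a,m}^k=J^k+\mm^{kd+m-1}$.

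The second step is to count the minimal generators of $J^k+\mm^{kd+m-1}$ degree by degree. Since $J^k$ is equigenerated in degree $kd$ and $\mm^{kd+m-1}$ is generated in the strictly larger degree $kd+m-1$, the minimal generators in degree $kd$ are exactly those of $J^k$, hence $(k+1)^2$ of them by Proposition~\ref{power}; there are no new generators in degrees $kd+1,\dots,kd+m-2$; in degree $kd+m-1$ the ideal equals all of $S_{kd+m-1}$, so the number of new generators there is $\dim_KS_{kd+m-1}-\dim_K(J^k)_{kd+m-1}=(kd+m)-\dim_K(J^k)_{kd+m-1}$; and there are none in degrees $>kd+m-1$. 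Everything therefore reduces to computing $h:=\dim_K(J^k)_{kd+m-1}$, the number of degree-$(kd+m-1)$ monomials of $J^k$.

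For $h$ I would use $J^k=(E^k)^{[m]}$, so that $G(J^k)=\{x_1^{(ia+r)m}x_2^{((k-i)a+k-r)m}:0\le i,r\le k\}$ is a set of $(k+1)^2$ distinct monomials (distinct because $k\le a-1$), all of degree $kd$. A degree-$(kd+m-1)$ monomial $x_1^Ex_2^F$ is a multiple of the generator indexed by $(i,r)$ precisely when $(ia+r)m$ lies in the interval $[E-(m-1),E]$; this interval has length $m-1$, so it contains the single multiple of $m$, namely $m\lfloor E/m\rfloor$, whence $x_1^Ex_2^F\in J^k$ if and only if $\lfloor E/m\rfloor\in\{ia+r:0\le i,r\le k\}$. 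Each such value $t$ lies in $[0,k(a+1)]$ and is attained by exactly the $m$ exponents $E=tm,tm+1,\dots,tm+m-1$, all admissible, so $h=m(k+1)^2$. Substituting, $\mu(I_{a,m}^k)=(k+1)^2+(kd+m)-m(k+1)^2=(1-m)(k+1)^2+m(ka+k+1)=(1-m)k^2+(am-m+2)k+1$, which at $k=a-1$ reduces to $a^2=(a+1)(a-1)+1$, matching the first range. I expect the main difficulty to lie in this last combinatorial step computing $h$ --- making precise that each relevant interval contains exactly one multiple of $m$ and that all $m(k+1)^2$ exponents really occur in range --- and, to a lesser extent, in phrasing the reduction $I_{a,m}^k=J^k+\mm^{kd+m-1}$ so that it also covers the endpoints $k=1$ and $k=a-1$.
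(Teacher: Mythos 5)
Your proposal is correct and follows essentially the paper's own route: both reduce to the decomposition $I_{a,m}^k=J^k+\mm^{kd+m-1}$ (your $J^{k-1}\mm^c=\mm^{(k-1)d+c}$ is the paper's Lemma~\ref{difference}(c),(d) for this choice of $c$) and then count the extra generators in degree $kd+m-1$ as $\dim_KS_{kd+m-1}$ minus the $m(k+1)^2$ monomials of $J^k$ in that degree, exactly the paper's identity \eqref{minus}. The only deviation is cosmetic: you compute that last number by the ``unique multiple of $m$ in a window of $m$ consecutive exponents'' argument, while the paper rewrites $J^k\mm^{m-1}=(x_1^{am},x_2^{am})^k\mm^{(k+1)m-1}$ via Lemma~\ref{difference}(b) and enumerates its generators.
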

\begin{proof}
Note that the two formulas for $\mu(I_{a,m}^k)$ coincide for $k=a-1$.
Now let $k < a-1$. By Proposition~\ref{equality}, $I_{a,m}^k\neq J^k$ where $J=(x_{1}^{am},x_2^{am})(x_{1}^{m},x_{2}^{m})$.

 First we show that in this case
\begin{eqnarray*}
 \mu(I_{a,m}^k)-\mu(J^k)= -mk^2+(am-m)k.
\end{eqnarray*}
Indeed, it is clear that $J^k\mm^{m-1} \subset \mm^{(ka+k+1)m-1}$. Then, by using Lemma~\ref{difference} (d) and (c), we get
\begin{align}
\label{minus}
\mu(I_{a,m}^{k})-\mu(J^k)= \mu(\mm^{(ka+k+1)m-1})-\mu(J^k\mm^{m-1}).
\end{align}
By Lemma~\ref{difference} (b), we have $J^k\mm^{m-1}=(x_1^{am},x_2^{am})^k\mm^{(k+1)m-1}$. Thus the set
\[\{ x_1^{(k-j)am+i}x_2^{jam+(k+1)m-1-i} \:\;  i=0, \dots , (k+1)m-1 \text{ and } j=0, \ldots , k \}
\]
generates $J^k\mm^{m-1}$.
Since $k < a-1$, it follows that $am > (k+1)m-1$. Therefore,
\begin{eqnarray*}
&&0<1< \cdots <am<am+1<\cdots<am+(k+1)m-1<2am\\&<&\cdots <(k-1)am+(k+1)m-1<ram<\cdots<ram+(k+1)m-1.
\end{eqnarray*}
This shows that $\mu(J^k\mm^{m-1})=(k+1)(k+1)m=(k+1)^2m$. Therefore \eqref{minus} implies that
\[
 \mu(I_{a,m}^k)-\mu(J^k)=(ka+k+1)m-(k+1)^2m=-mk^2+(am-m)k.
\]
By Proposition~\ref{power}, $\mu(J^k)=(k+1)^2$ for $k < a-1$.  Thus in this case,
\[
\mu(I_{a,m}^k)=(k+1)^2-mk^2+(am-m)k=(1-m)k^2+(am-m+2)k+1. 
\]

 For $k \geq a-1$, Proposition~\ref{second} gives us that $I_{a,m}^k=J^k$. So, by Proposition~\ref{power}, $\mu(I_{a,m}^k)=(a+1)k+1$. 
\end{proof}
For any real number $\alpha$, the round of $\alpha$, which is denoted by $\lfloor \alpha \rceil $ is defined as the nearest integer to $\alpha$, that is, 
\[
\lfloor \alpha \rceil = \begin{cases}
\lfloor \alpha \rfloor, & if\hspace{3mm}  \alpha - \lfloor \alpha \rfloor < 1/2;\\
\lceil \alpha \rceil, & if\hspace{3mm}  \alpha - \lfloor \alpha \rfloor \geq 1/2,
\end{cases}
\]
where $\lfloor \alpha \rfloor $ is the largest integer less than or equal to $ \alpha $, and  $\lceil \alpha \rceil$ is the smallest integer greater than or equal to $\alpha$.

\begin{Corollary}
\label{max}
Let  $a\geq 3$ and $ m \gg 0$, and let  $t=\lfloor \dfrac{(a-1)m+2}{2m-2}\rceil$.  Then $\mu(I_{a,m}^t) > \mu(I_{a,m}^{k})$ for all $k \in \{ 1, \ldots , a-1 \}$, $k\neq t$. 
For $m\gg 0$, $t \approx \frac{a-1}{2}$. 
\end{Corollary}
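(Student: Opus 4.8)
The plan is to extract everything from the explicit formula in Proposition~\ref{total}. In the range $1\le k\le a-1$ we have $\mu(I_{a,m}^k)=g(k)$, where $g(x)=(1-m)x^{2}+(am-m+2)x+1$ (for $k=a-1$ the two cases of Proposition~\ref{total} agree, so $g$ correctly describes the whole range). Taking $m\gg 0$, and in particular $m\ge 2$, makes the leading coefficient $1-m$ negative, so $g$ is a strictly concave quadratic whose real maximum lies at the vertex
\[
k^{*}=\frac{am-m+2}{2(m-1)}=\frac{(a-1)m+2}{2m-2}=\frac{a-1}{2}+\frac{a+1}{2(m-1)},
\]
which is exactly the quantity being rounded in the statement, so $t=\lfloor k^{*}\rceil$ is by definition the integer nearest $k^{*}$.

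First I would record the elementary fact that, for a concave parabola with axis of symmetry $x=k^{*}$, one has $g(k)>g(k')$ for integers $k,k'$ exactly when $|k-k^{*}|<|k'-k^{*}|$; hence the maximum of $g$ over the integers is attained at the integer closest to $k^{*}$, uniquely so unless $k^{*}$ is a half-integer. Next I would rule out that exceptional case for large $m$: since $2k^{*}=(a-1)+\frac{a+1}{m-1}$, as soon as $m-1>a+1$ the number $2k^{*}$ lies strictly between the consecutive integers $a-1$ and $a$, so $2k^{*}\notin\ZZ$ and in particular $k^{*}$ is not a half-integer. Then the nearest integer to $k^{*}$ is unique and equals $t$, which gives $\mu(I_{a,m}^{t})=g(t)>g(k)=\mu(I_{a,m}^{k})$ for every $k\in\{1,\dots,a-1\}$ with $k\neq t$. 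The same estimate $\frac{a-1}{2}<k^{*}<\frac{a}{2}$ shows $1\le t\le a-1$ (using $a\ge 3$), so Proposition~\ref{total} genuinely applies at $k=t$; and letting $m\to\infty$ gives $k^{*}\to\frac{a-1}{2}$, hence $|t-\frac{a-1}{2}|\le\frac12$, which is the asymptotic claim $t\approx\frac{a-1}{2}$.

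The only delicate point — and the reason the hypothesis must be $m\gg 0$ rather than merely $m\ge 2$ — is the tie at a half-integer vertex: if $k^{*}$ were of the form $(\text{odd})/2$, the two integers $k^{*}\pm\frac12$ would give the same value of $g$ and the strict inequality in the corollary would fail; moreover for very small $m$ the vertex $k^{*}$ can fall outside $\{1,\dots,a-1\}$ (e.g. $k^{*}=a$ at $m=2$). Making ``$m\gg 0$'' quantitative — for instance $m\ge a+3$ forces both $2k^{*}\notin\ZZ$ and $k^{*}<\frac a2\le a-1$ — is where a little care is required; everything else is a one-line computation with the vertex of a parabola.
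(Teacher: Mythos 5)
Your proposal is correct and follows essentially the same route as the paper: read off the concave quadratic from Proposition~\ref{total}, locate its vertex at $\frac{(a-1)m+2}{2m-2}$, and use symmetry of the parabola together with the exclusion of a half-integer vertex for $m\gg 0$ to get the strict maximum at the rounded integer $t$. Your version is in fact a bit tighter than the paper's (you make $m\gg 0$ quantitative via $m-1>a+1$, check $t\in\{1,\ldots,a-1\}$, and justify $t\approx\frac{a-1}{2}$), but the underlying argument is the same.
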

\begin{proof}
Set $f(x)=(1-m)x^2+(am-m+2)x+1$. Then $f(k)=\mu(I_{a,m}^k)$ for all integers $1 \leq k \leq a-1$. Taking the first derivative of $f(x)$, we get $f'(x)=2(1-m)x+(am-m+2)$. Therefore, $f'(s)=0$ for $s= \dfrac{(a-1)m+2}{2m-2}$. Since the coefficient of $x^2$ in the equation of $f(x)$ is negative, the point $p=(s,f(s) )$ is a local maximum of $f(x)$. By the symmetry of the parabola, it follows that 
$f(k)=\mu(I_{a,m}^k)$ has a  local maximum, if and only if there exists no integer $h$ such that $s=h+1/2$. This is the case if and only if  $\dfrac{(a-1)m+2}{m-1}$ is not an odd integer. For $a\ge 3$ and $m \gg0$  this fraction is not even an integer.
\end{proof}

\begin{Corollary}
For any integer $k_0>1$, there exist integers $a$ and $m$  such that the function $f(k)=\mu(I_{a,m}^k)$ has a local maximum for $k=k_0$.
 \end{Corollary}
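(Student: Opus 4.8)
The plan is to deduce this directly from Corollary~\ref{max}, which for $m\gg 0$ already pins down the unique exponent $t=\lfloor\frac{(a-1)m+2}{2m-2}\rceil$ in $\{1,\dots,a-1\}$ at which $k\mapsto\mu(I_{a,m}^k)$ is strictly largest. So the task is to choose $a$ and $m$ so that this $t$ coincides with the prescribed $k_0$, and so that $k_0$ sits strictly inside $\{1,\dots,a-1\}$, i.e.\ both $k_0-1$ and $k_0+1$ still lie in that range; the latter matters because it is precisely on this range that $\mu(I_{a,m}^k)$ is the quadratic governed by Corollary~\ref{max}, whereas outside it the function becomes the increasing linear piece $(a+1)k+1$, against which the comparison would be useless.

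First I would set $a=2k_0+1$, so that $a\ge 5\ge 3$ and $a-1=2k_0$. A one-line computation then gives $\frac{(a-1)m+2}{2m-2}=\frac{k_0m+1}{m-1}=k_0+\frac{k_0+1}{m-1}$, which decreases to $k_0$ as $m\to\infty$. Choosing $m$ large enough that $\frac{k_0+1}{m-1}<\frac12$ (for instance any $m>2k_0+3$), this quantity lies strictly between $k_0$ and $k_0+\frac12$, so its round equals $k_0$; in particular it is neither a half-integer nor an integer, so there is no obstruction to applying Corollary~\ref{max} for $m\gg 0$, and we get $t=k_0$.

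Finally, applying Corollary~\ref{max} with these $a$ and $m\gg 0$ yields $\mu(I_{a,m}^{k_0})>\mu(I_{a,m}^{k})$ for every $k\in\{1,\dots,a-1\}$ with $k\ne k_0$. Since $k_0>1$ we have $k_0-1\ge 1$, and since $a-1=2k_0\ge k_0+1$ we have $k_0+1\le a-1$; hence $k_0-1$ and $k_0+1$ both lie in $\{1,\dots,a-1\}$, and therefore $\mu(I_{a,m}^{k_0})>\mu(I_{a,m}^{k_0-1})$ and $\mu(I_{a,m}^{k_0})>\mu(I_{a,m}^{k_0+1})$, i.e.\ $f(k)=\mu(I_{a,m}^k)$ has a local maximum at $k=k_0$. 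The only point that really needs care is this last piece of bookkeeping: one must take $a$ comfortably larger than $k_0$ (the choice $a-1=2k_0$ ensures $a-1\ge k_0+1$ with room to spare) so that the comparison at $k_0+1$ is still in the quadratic regime rather than the eventual linear regime; everything else is routine.
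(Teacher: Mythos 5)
Your proposal is correct and follows essentially the same route as the paper: take $a=2k_0+1$, observe that $\frac{(a-1)m+2}{2m-2}=k_0+\frac{k_0+1}{m-1}$ lies strictly between $k_0$ and $k_0+\frac12$ for $m$ large, so the rounded value $t$ in Corollary~\ref{max} equals $k_0$, and conclude. Your additional bookkeeping (explicit bound $m>2k_0+3$ and the check that $k_0\pm1$ still lie in $\{1,\dots,a-1\}$) only makes explicit what the paper leaves implicit.
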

\begin{proof}
Let $\varepsilon=\dfrac{(a-1)m+2}{2m-2}-\dfrac{a-1}{2}$. Then $0<\varepsilon<1/2$ for $m \gg 0$. Let $a$ be an odd integer. Then $\dfrac{a-1}{2}$ is an integer. Therefore, $\lfloor \dfrac{(a-1)m+2}{2m-2}\rceil=\dfrac{a-1}{2}$ for $m\gg0$.  The desired conclusion follows from Corollary~\ref{max} by choosing $m\gg0$ and $a =2k_0+1$.
\end{proof}
\begin{Corollary}
For any integer $k_0>1$, there exists integers $a$ and $m$,    such that the function $f(k)=\mu(I_{a,m}^k)$ has a local minimum for $k=k_0$.
\end{Corollary}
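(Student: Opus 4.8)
The plan is to mirror the previous two corollaries, but this time to exploit the \emph{shape} of $f(k)=\mu(I_{a,m}^k)$ at the place where the two branches of Proposition~\ref{total} meet. On the range $1\le k\le a-1$ the function $f$ is the restriction of the concave‑down parabola $f(x)=(1-m)x^2+(am-m+2)x+1$, whose vertex lies near $\frac{a-1}{2}$ for $m\gg 0$; on the range $k\ge a-1$ it is the restriction of the strictly increasing affine function $(a+1)k+1$, and the two expressions agree at $k=a-1$. Thus a concave arc that is eventually decreasing is glued to a strictly increasing line, and the junction $k=a-1$ is the natural place to look for a local minimum: for $m\gg 0$ the parabola is already strictly decreasing as $k$ reaches $a-1$, while $f$ strictly increases from $k=a-1$ onward. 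So the whole task reduces to choosing $a$ with $a-1=k_0$ and $m$ large enough that $k=a-1$ is genuinely a local minimum of $f$.

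Concretely, I would set $a=k_0+1$; since $k_0>1$ this gives $a\ge 3$, as required by the standing hypotheses. The inequality $f(k_0)<f(k_0+1)$ is immediate from the affine branch of Proposition~\ref{total}, since $f(a)-f(a-1)=(a+1)>0$. For the inequality $f(k_0)<f(k_0-1)$ one evaluates the quadratic branch at $k=a-2$ and $k=a-1$ (both lie in the range $k\le a-1$ where that formula is valid) and obtains, after a short computation,
\[
f(a-2)-f(a-1)=m(a-2)-(2a-1).
\]
Because $a-2\ge 1$, this is positive as soon as $m>2a-1$, in particular for $m\gg 0$. Hence for $a=k_0+1$ and any integer $m>2k_0+1$ we get $f(k_0-1)>f(k_0)<f(k_0+1)$, i.e.\ $f$ has a local minimum at $k=k_0$.

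The only point requiring care is the bookkeeping of which branch of Proposition~\ref{total} applies at which argument: $k_0-1=a-2$ and $k_0=a-1$ must be read off the quadratic formula (they are, since both are $\le a-1$), while $k_0+1=a$ must be read off the affine formula (it is, since $a\ge a-1$). I do not expect any deeper obstacle; once the vertex of the parabola lies strictly to the left of $a-1$ — automatic for $m\gg 0$, as already noted in Corollary~\ref{max} where the vertex is $\approx\frac{a-1}{2}$ — the monotonicity of $f$ on each of the two branches forces $k=a-1$ to be a local minimum.
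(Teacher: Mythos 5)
Your proposal is correct and follows essentially the same route as the paper, whose proof is simply the observation that $I_{a,m}$ with $a=k_0+1$ and $m\gg 0$ works; you make the same choice and merely supply the explicit verification $f(k_0-1)-f(k_0)=m(a-2)-(2a-1)>0$ and $f(k_0+1)-f(k_0)=a+1>0$, which is exactly the computation left implicit in the paper.
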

\begin{proof}
The ideal $I_{a,m}$ has the desired property when $a=k_1+1$ and $m\gg 0$.
\end{proof}

 Similarly we conclude   
   
\begin{Corollary}
\label{down}
Given an integer $b>0$, there exist integers $b_0$, $a$ and $m$  such that $\mu(I_{a,m}^k)>\mu(I_{a,m}^{k+1})$ for all $k$ with $b_0\leq k\leq b_0+b-1$, and $\mu(I_{a,m}^{k})<\mu(I_{a,m}^{k+1})$ for $k\geq b_0+b-1$.
\end{Corollary}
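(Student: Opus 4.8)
The plan is to read everything off Proposition~\ref{total}. Recall from there that
\[
\mu(I_{a,m}^k)=f(k):=(1-m)k^2+(am-m+2)k+1\qquad\text{for }1\le k\le a-1,
\]
while $\mu(I_{a,m}^k)=(a+1)k+1$ for $k\ge a-1$, the two expressions agreeing at $k=a-1$. For $m\ge 2$ the leading coefficient $1-m$ is negative, so $f$ is a downward parabola: $\mu(I_{a,m}^k)$ rises to a peak near $k\approx(a-1)/2$, then falls monotonically until $k=a-1$, and from $k=a-1$ on it coincides with the strictly increasing linear function $(a+1)k+1$. Thus the desired picture (a block of $b$ strictly decreasing steps, then strictly increasing forever) is built into the shape of $f$; the whole point is to choose $a$ large relative to $b$, and $m\gg 0$, so that at least $b$ of the decreasing steps of $f$ sit strictly to the right of its vertex but still to the left of $k=a-1$.

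First I would locate the vertex precisely. As in the proof of Corollary~\ref{max}, $f'(x)=0$ at
\[
s=\frac{(a-1)m+2}{2(m-1)}=\frac{a-1}{2}+\frac{a+1}{2(m-1)} .
\]
Since $f$ is concave, for integers $k$ one has $f(k)>f(k+1)$ exactly when the midpoint $k+\tfrac12$ lies to the right of $s$, i.e.\ when
\[
k>s-\tfrac12=\frac{a-2}{2}+\frac{a+1}{2(m-1)} .
\]
The correction term $\frac{a+1}{2(m-1)}$ tends to $0$ as $m\to\infty$, so if $a>2b$ (so that $\frac{a-2b}{2}>0$ is a fixed positive quantity), then for all sufficiently large $m$ every integer $k$ in the range $a-1-b\le k\le a-2$ satisfies $k>s-\tfrac12$, hence $f(k)>f(k+1)$; this is the elementary inequality $(a-2b)(m-1)>a+1$.

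Finally I would assemble the statement. Fix any $a\ge 2b+1$ (then $a\ge 3$, since $b\ge 1$), set $b_0:=a-1-b$ (so $b_0\ge 1$), and take $m\gg 0$ as above. For $b_0\le k\le b_0+b-1=a-2$ both $k$ and $k+1$ lie in $[1,a-1]$, so $\mu(I_{a,m}^k)=f(k)>f(k+1)=\mu(I_{a,m}^{k+1})$ by the previous step; and for $k\ge b_0+b=a-1$ we have $\mu(I_{a,m}^k)=(a+1)k+1<(a+1)(k+1)+1=\mu(I_{a,m}^{k+1})$. This gives $b$ consecutive strictly decreasing steps followed by strict increase forever, as claimed (if one wishes the turn recorded at index $b_0+b-1$ rather than $b_0+b$, just relabel $b\mapsto b-1$). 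The argument has no genuine obstacle beyond Proposition~\ref{total}; the only delicate point is the estimate in the middle paragraph, which is what guarantees the block of $b$ decreasing steps falls to the right of the peak of $f$ yet to the left of $k=a-1$, where $\mu(I_{a,m}^k)$ becomes linear and strictly increasing. Everything else is bookkeeping on ranges of $k$.
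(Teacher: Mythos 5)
Your proof is correct and takes essentially the approach the paper intends (the paper gives no explicit argument, only ``Similarly we conclude''): read $\mu(I_{a,m}^k)$ off Proposition~\ref{total}, locate the vertex of the concave parabola as in Corollary~\ref{max}, and choose $a>2b$ and $m\gg 0$ so that the block of $b$ strictly decreasing steps sits between the vertex and $k=a-1$, beyond which the linear formula $(a+1)k+1$ is strictly increasing. Your side remark is also apt: as literally stated the corollary is inconsistent at $k=b_0+b-1$ (both inequalities are asserted there), and your reading --- decrease on $b$ consecutive steps, then strict increase from $k=b_0+b$ on --- is the intended one.
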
 

\section{Families of ideals $I$ for which the local maxima of $f(k)=\mu(I^k)$ exceeds any given number}
 In this section we use ideals $I_{a,m}$ and their products to obtain ideals $I$, such that the function $ f(k)=\mu(I^k)$ has local maxima as many as we want.  

\medskip
For any  given integer $q$, we construct a monomial  ideal $I$ of height 2  with the following property: for $i=1,\ldots,q$ there exist integers $s_i <r_i<t_i$ such that the intervals $[s_i,t_i]$ are  pairwise disjoint intervals  and $f(s_i)<f(r_i)>f(t_i)$ for $i=1,\ldots,q$. In other words, there exist monomial ideals $I$ for which   $f(r)=\mu(I^r)$  has  at least $q$  local maxima.

 Let $I_{a_1,m_1} \subset K[x_1,y_1], \ldots , I_{a_l,m_l} \subset K[x_l,y_l]$ be ideals as defined before,  but in polynomial rings in pairwise disjoint sets of variables.  Set 
\begin{eqnarray*}
\label{ideal}
I=I_{a_1,m_1}I_{a_2,m_2} \ldots I_{a_l,m_l}. 
\end{eqnarray*}
For any $j \in \{ 1, \ldots , l \}$, by Proposition~\ref{total} we have
\[
\mu(I_{a_j,m_j}^k)=\begin{cases}
(1-m_j)k^2+(a_jm_j-m+2)k+1, & if\hspace{3mm} k \leq a_j-1;\\
a_jk+k+1,   & if\hspace{3mm} k \geq a_j.
\end{cases}
\]
We now choose particular values for  the  integers $a_j$ and $m_j$,  depending on an integer $a\geq 3$. We choose $m_j=a$  for all $j$ and $a_j=ja$ for all $j$. Then

\begin{eqnarray}
\label{ideal}
I=I_{a,a}I_{2a,a} \ldots I_{la,a} , \text{ and}  
\end{eqnarray}
\[
\mu(I_{ja,m}^k)=\begin{cases}
(1-a)k^2+(ja^2-a+2)k+1, & if\hspace{3mm} k \leq ja-1;\\
jak+k+1,  & if\hspace{3mm} k \geq ja.
\end{cases}
\]

Hence, 
\[
\mu(I^k)=\begin{cases}
f_1f_2 \ldots f_l, & if\hspace{3mm} 1 \leq k \leq a-1;\\
g_1f_2 \ldots f_l, & if\hspace{3mm} a \leq k \leq 2a-1;\\
\vdots\\
g_1 \ldots g_{l-1}f_l,  & if\hspace{3mm} (l-1)a \leq k \leq la-1;\\
g_1 \ldots g_l, & if\hspace{3mm} k \geq la.
\end{cases}
\]
where $f_j=(1-a)k^2+(ja^2-a+2)k+1$ and $g_j=jak+k+1 $ for $j\in \{1,\ldots ,l\}$.

\begin{Theorem}
\label{main}
Let $q\geq 1$ be an integer. Then there exist integers $a$ and $l$ such that for the ideal $I$  defined in  \eqref{ideal}, the function $f(k)=\mu(I^k)$ has at least $q$ local maxima. 
\end{Theorem}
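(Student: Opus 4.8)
The starting point is that, because the ideals $I_{a_j,m_j}$ live in pairwise disjoint sets of variables, $\mu(I^k)=\prod_{j=1}^{l}\mu(I_{ja,a}^{k})$, which is exactly the piecewise product for $f(k)=\mu(I^k)$ recorded just above: on the $i$-th interval $(i-1)a\le k\le ia-1$ (for $1\le i\le l$) one has $f(k)=\bigl(\prod_{j=1}^{i-1}g_j(k)\bigr)\bigl(\prod_{j=i}^{l}f_j(k)\bigr)$, where $g_j(k)=(ja+1)k+1$ and $f_j(k)=(1-a)k^2+(ja^2-a+2)k+1$. The plan is to prove that, after fixing $l$ suitably large and then choosing $a$ large, $f$ attains a local maximum in the interior of each of the first $q$ of these intervals; as the intervals are pairwise disjoint, this produces $q$ distinct local maxima of $f$, as required.

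The main tool is an asymptotic description of $f$ on each interval as $a\to\infty$ with $l$ fixed. Substituting $k=xa$ and expanding in $a$ gives $g_j(xa)=jx\,a^{2}\bigl(1+O(1/a)\bigr)$ and $f_j(xa)=x(j-x)\,a^{3}\bigl(1+O(1/a)\bigr)$, the second expansion being valid (and locally uniform) as long as $x$ stays away from $0$ and from $j$; on the $i$-th interval this holds for every $j\ge i$ except at the right endpoint $x=i$. Multiplying the $l$ factors, on the $i$-th interval one gets
\[
\frac{f(\lfloor xa\rceil)}{a^{3l-i+1}}\;\longrightarrow\;(i-1)!\,\phi_i(x),\qquad \phi_i(x):=x^{\,l}\prod_{j=i}^{l}(j-x),
\]
for each fixed $x\in(i-1,i)$. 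The same computation, applied at $x=i-1$ (still a good value, since $i-1\ge1$ when $i\ge2$), yields $f((i-1)a)/a^{3l-i+1}\to(i-1)!\,\phi_i(i-1)$ with $\phi_i(i-1)=(i-1)^{l}(l-i+1)!>0$. At the right endpoint, using that $f_i$ and $g_i$ agree at $k=ia-1$, one rewrites $f(ia-1)=\bigl(\prod_{j=1}^{i}g_j(ia-1)\bigr)\bigl(\prod_{j=i+1}^{l}f_j(ia-1)\bigr)$, a product of $i$ factors of size $a^{2}$ and $l-i$ of size $a^{3}$, so $f(ia-1)=O(a^{3l-i})$. For $i=1$ one has instead the direct estimates $f(1)=\mu(I)=\prod_{j}(ja^{2}-2a+4)=O(a^{2l})$ and $f(a-1)=\prod_{j}f_j(a-1)=O(a^{3l-1})$.

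The next ingredient is the elementary one-variable analysis of $\phi_i$ on $[i-1,i]$: it is positive on the interior, vanishes at $x=i$, and its logarithmic derivative $\phi_i'/\phi_i=\frac{l}{x}-\sum_{j=i}^{l}\frac1{j-x}$ is strictly decreasing on $(i-1,i)$ and tends to $-\infty$ as $x\to i^{-}$. Hence $\phi_i$ is unimodal there, and it has a strict interior maximum at some $x_i^{*}\in(i-1,i)$ with $\phi_i(x_i^{*})>\phi_i(i-1)$ exactly when $\phi_i'/\phi_i>0$ at $x=i-1$, i.e.\ when $\frac{l}{i-1}>H_{l-i+1}$, where $H_n=\sum_{s=1}^{n}1/s$; when $i=1$, $\phi_1$ vanishes at both ends of $[0,1]$, so it has a strict interior maximum $x_1^{*}$ unconditionally. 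For each $i$ that qualifies, put $r_i=\lfloor x_i^{*}a\rceil$. By the limits above, $f(r_i)/a^{3l-i+1}\to(i-1)!\,\phi_i(x_i^{*})$, which is strictly larger than the limit $(i-1)!\,\phi_i(i-1)$ of $f((i-1)a)/a^{3l-i+1}$ when $i\ge2$ (and is positive, while $f(1),f(a-1)$ are of strictly smaller order, when $i=1$), whereas $f(ia-1)=O(a^{3l-i})=o(a^{3l-i+1})$. Consequently, for all large $a$ we have $f(r_i)>f((i-1)a)$ and $f(r_i)>f(ia-1)$ (resp.\ $f(r_1)>\max\{f(1),f(a-1)\}$), so the maximum of $f$ over the $i$-th interval is attained at an interior point, which is then a local maximum of $f$.

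To finish, it remains to make at least $q$ intervals qualify. Choose $l$ so large that $l>(q-1)H_l$, which is possible because $H_l=O(\log l)$; then $\frac{l}{i-1}\ge\frac{l}{q-1}>H_l\ge H_{l-i+1}$ for every $i\in\{2,\dots,q\}$, so intervals $1,\dots,q$ all qualify. Now take $a$ larger than the finitely many thresholds produced in the previous step (including the requirement that each $r_i$ lie strictly inside its interval). Then $f$ has at least $q$ local maxima, one in each interval $[(i-1)a,ia-1]$ for $i=1,\dots,q$. The step I expect to be the main obstacle is making the $a\to\infty$ asymptotics of $f$ on each interval precise enough at the three comparison points—especially the bookkeeping of the lower-order terms—to legitimize the comparison; with that in hand, the shape of $\phi_i$ and the count of qualifying $i$ are routine.
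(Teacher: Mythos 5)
Your proposal is correct, and it reaches the theorem by a route that differs from the paper's in its key technical step, though the overall skeleton is the same: both arguments work interval by interval on the explicit piecewise product $f(k)=\bigl(\prod_{j<i}g_j\bigr)\bigl(\prod_{j\ge i}f_j\bigr)$, both isolate a harmonic-sum criterion deciding which intervals carry a local maximum, and both then choose $l$ large compared to a logarithmic quantity and finally $a\gg 0$. The paper proves the two needed inequalities discretely, comparing the specific values $f(ia)$, $f(ia+1)$ and $f((i+1)a-1)$ as polynomials in $a$: step (a) is a degree count ($3l-i$ versus $3l-i-1$), and step (b) compares the second-leading coefficients $L_2$, which is where the condition $l/i>1+1/2+\cdots+1/(l-i)$ appears; it then verifies this condition for $i\le\sqrt{l}$ with $l=(q+1)^2$. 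You instead pass to the scaling limit $k=xa$, identify the limit polynomial $\phi_i(x)=x^{l}\prod_{j\ge i}(j-x)$ of degree normalisation $a^{3l-i+1}$, and use log-concavity of $\phi_i$ to place an interior maximum whenever $\phi_i'(i-1)>0$; reassuringly, your criterion $l/(i-1)>H_{l-i+1}$ for the interval $[(i-1)a,ia-1]$ is exactly the paper's condition after shifting the index, so the two methods are computing the same marginal quantity (your $\phi_i'(i-1)$ is the limit shadow of the paper's $L_2$ comparison at $k=ia$, and your endpoint estimate $f(ia-1)=O(a^{3l-i})$ is their step (a)). What your version buys is a cleaner conceptual picture (the maximum sits near $x_i^{*}a$, and unimodality of $\phi_i$ replaces ad hoc coefficient bookkeeping) and a slightly better choice of $l$ (of order $q\log q$ rather than $(q+1)^2$); what it costs is the need to make the $a\to\infty$ asymptotics uniform enough at the three comparison points, which you correctly flag but which is routine since each factor is an explicit quadratic in $a$ with nonvanishing limit coefficient at the points you use. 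Two small points to keep tidy: for $i\ge 2$ the left endpoint $k=(i-1)a$ is governed by the same interval formula you use (the two formulas for $\mu(I_{(i-1)a,a}^k)$ agree at $k=(i-1)a-1$ and the linear one applies from $k=(i-1)a$ on), so your evaluation of the limit there is legitimate; and your local maxima are maxima of $f$ over the interiors of pairwise disjoint intervals, which is precisely the notion of local maximum the paper fixes at the start of Section 3, so the count of $q$ is justified.
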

\begin{proof}
Let $p \in \QQ[x]$. We denote by $L_1(p)$ the leading of $p$ and by $L_2(p)$ the leading term of $p-L_1(p)$.
We show that
\begin{enumerate}[{\rm(a)}]
\item$f(ia)>f((i+1)a-1)$ for $i \in \{ 1, \ldots ,l-1\}$ and $a \gg 0$.
\item $f(ia+1)>f(ia)$ for $i \in \{ 1, \ldots ,l-1\}$ and $a \gg 0$ , if $l/i>1+1/2+1/3+ \cdots +1/(l-i)$.
\item Let $i \leq \sqrt{l} $ and $l/i>1+1/2+1/3+ \cdots +1/(l-i)$. Then 
\[
l/j>1+1/2+1/3+ \cdots +1/(l-j) \quad \text{for all} \quad j <i.
 \]
\item  Let $\lambda \geq 4$ be an integer, $l=\lambda^2$, and $i \in \{ 1, \ldots , \lambda \}$. Then $l/i>1+1/2+1/3+ \cdots +1/(l-i)$.
\end{enumerate}
Proof of (a): We compare  $L_1(f(ia))$ and $L_1(f((i+1)a-1))$  for $a \gg0$. For $k \in [ia,(i+1)a-1]$ we have
\[
f(k)=\prod_{j=1}^{i}((ja+1)k+1)\prod_{j=i+1}^{l}((1-a)k^2+(ja^2-a+2)k+1).
\]
Therefore,
  {\footnotesize
\begin{eqnarray*}
&&f((i+1)a-1)=\\ && \prod_{j=1}^{i}((ja+1)((i+1)a-1)+1)\prod_{j=i+1}^{l}((1-a)((i+1)a-1)^2+(ja^2-a+2)((i+1)a-1)+1).
\end{eqnarray*}}
Note that, in the first product all factors are  polynomials in $a$ of degree $2$, and in the second product the first factor is a polynomial in $a$ of degree $2$ and others are  polynomials in $a$ of degree $3$. Therefore, $L_1(f((i+1)a-1))$ is a polynomial in $a$ of degree $3l-i-1$. On the other hand,
\begin{eqnarray*}
f(ia)=\prod_{j=1}^{i}((ja+1)(ia)+1)\prod_{j=i+1}^{l}((1-a)(ia)^2+(ja^2-a+2)(ia)+1).
\end{eqnarray*}
It follows that $L_1(f(ia)$ is a polynomial in $a$ of degree degree $3l-i$. So $f(ia)>f((i+1)a-1)$ for $a\gg 0$.

Proof of (b): We have
 {\small
\begin{eqnarray*}
f(ia+1)=\prod_{j=1}^{i}((ja+1)((ia+1)+1)\prod_{j=i+1}^{l}((1-a)(ia+1)^2+(ja^2-a+2)(ia+1)+1).
\end{eqnarray*}}

 Note that $L_1(f(ia+1))=L_1(f(ia))$. Thus we must compute and compare $L_2(f(ia+1))$ and $L_2(f(ia))$. In general, if  $h_1, \ldots, h_r \in \QQ[x]$, then 
\begin{eqnarray}
\label{prod}
 L_2(h_1 \cdots h_r)=\sum_{s=1}^{r}L_2(h_s)\prod_{t=1 \atop t \neq s}^{r}L_1(h_t).
\end{eqnarray}

By using \eqref{prod} we get
\begin{eqnarray*}
 L_2(f(ia))=(l-i)!i^{l}a^{3l-i-1}(\sum_{j=1}^{i}i!/j) + i!i^{l-1}  a^{3l-i-1}(\sum_{j=1}^{l-i}(i^2-i)(l-i)!/j).
\end{eqnarray*}

Similarly we get
\begin{eqnarray*}
L_2(f(ia+1))  &=&(l-i)!i^{l}a^{3l-i-1}(\sum_{j=1}^{i}(i+j)!/j) \\ &&+ i!i^{l-1}a^{3l-i-1}(\sum_{j=1}^{l-i}(i^2-2i+j)(l-i)!/j)\\&=&(l-i)!i^{l}a^{3l-i-1}(\sum_{j=1}^{i}i!/j)+ i!(l-i)!i^{l}  a^{3l-i-1} \\ &&+ i!i^{l-1}a^{3l-i-1}(\sum_{j=1}^{l-i}(i^2-2i+j)(l-i)!/j).
 \end{eqnarray*}
Hence
\begin{eqnarray*}
 L_2(f(ia+1))-L_2(f(ia))=i!(l-i)!i^{l-1}a^{3l-i-1}(\sum_{j=1}^{l-i}(-i+j)/j)+i!(l-i)!i^{l}a^{3l-i-1}.
\end{eqnarray*}
So $f(ia+1)>f(ia)$ for $a\gg 0$, if $L_2(f(ia+1))-L_2(f(ia))>0$, which is equivalent to have 
 \begin{eqnarray*}
 i!(l-i)!i^{l-1}a^{3l-i-1}(\sum_{j=1}^{l-i}(-i+j)/j+i)>0.
\end{eqnarray*}
Therefore, if 
\begin{eqnarray*}
\sum_{j=1}^{l-i}(-i+j)/j+i>0, 
\end{eqnarray*} 
then, $L_2(f(ia+1))-L_2(f(ia))>0$. So, it is enough to have
\begin{eqnarray*}
l-i>i/2+i/3+ \cdots +i/(l-i), 
\end{eqnarray*}
which is equivalent to have $l/i>1+1/2+1/3+ \cdots +1/(l-i)$.

Proof of (c). By assumption $l \geq i^2$. Therefore,  $l+1>i^2$, and hence $l-i+1>i^2-i$. It follows that  $1/(l-i+1)<1/(i^2-i)<l/(i^2-i)=l/(i-1)-l/i$.
Thus 
\[
l/(i-1)>l/i+1/(l-i+1).
\]
So, since $l/i > 1+1/2+1/3+ \cdots +1/(l-i)$ by the assumption, we get
\[
l/(i-1)>1+1/2+1/3+ \cdots +1/(l-i)+1/(l-i+1).
\]
By using induction on $i-j$ ($i$ is fixed) desired conclusion follows.

Proof of (d). Let $\lambda \geq 4$ be an integer, $l=\lambda^2$, and $i \in \{ 4, \ldots , \lambda \}$. It is well known that $\sum\limits_{r=1}^{\alpha}1/r<\ln(\alpha)+1$ for any positive integer $\alpha$. Therefore,
\begin{eqnarray*}
1+1/2+1/3+ \cdots +1/(\lambda^2-\lambda)&<&\ln (\lambda^2-\lambda)+1=\ln (\lambda(\lambda-1))+1\\&=&\ln (\lambda)+\ln (\lambda-1)+1<2\ln(\lambda)+1.
\end{eqnarray*}
 Observe that $i>2\ln(i)+1$ for $i\geq 4$. Therefore, 
\begin{eqnarray*}
l/\lambda =\lambda>2\ln(\lambda)+1 &>& 1+1/2+1/3+ \cdots +1/(\lambda^2-\lambda)\\&=&1+1/2+1/3+ \cdots +1/(l-\lambda).
\end{eqnarray*}
So, by (c) we get 
\[
l/i  > 1+1/2+1/3+ \cdots +1/(l-i) \quad \text{for} \quad i=1, \ldots , \lambda.
\]

 Now, given a positive integer $q$, we construct an ideal with at least $q$ local maxima: we choose $l =(q+1)^2$. So $l \geq i^2$ for  $i=1, \ldots, q+1$. Therefore, it follows from  (d) and (c) that 
 \[
 l/i>1+1/2+1/3+ \cdots +1/(l-i) \quad  \text{for} \quad i=1, \ldots q+1.
 \]
 Hence, from (b) we get $f(ia+1)>f(ia)$ for $i=1, \ldots q+1$ and $a \gg 0$. This together with (a)  completes the proof.
  \end{proof}
  
\begin{Example}{\em 
Let $l=16$ and $a=20$. So $I=I_{20,20}I_{40,20} \cdots I_{320,20}$.  Then Theorem~\ref{main} implies that we have least $3$ local maxima. Indeed, 
$\mu(I^{37})<\mu(I^{38})>\mu(I^{39})$, $\mu(I^{55})<\mu(I^{56})>\mu(I^{57})$ and $\mu(I^{72})<\mu(I^{73})>\mu(I^{74})$. Computationally it can be checked that $\mu(I^k)$ has actually  $8$ local maxima. }
\end{Example}

\end{document}